\theoremstyle{thmstyleone}%
\theoremstyle{thmstyletwo}%
\theoremstyle{thmstylethree}%
\begin{document}

\title[On the relation of the frame-related operators of fusion frame systems]{On the relation of the frame-related operators of fusion frame systems}
%
%%%=============================================================%%
%%% Prefix	-> \pfx{Dr}
%%% GivenName	-> \fnm{Joergen W.}
%%% Particle	-> \spfx{van der} -> surname prefix
%%% FamilyName	-> \sur{Ploeg}
%%% Suffix	-> \sfx{IV}
%%% NatureName	-> \tanm{Poet Laureate} -> Title after name
%%% Degrees	-> \dgr{MSc, PhD}
%%% \author*[1,2]{\pfx{Dr} \fnm{Joergen W.} \spfx{van der} \sur{Ploeg} \sfx{IV} \tanm{Poet Laureate} 
%%%                 \dgr{MSc, PhD}}\email{iauthor@gmail.com}
%%%=============================================================%%
%
\author*[1]{\fnm{Lukas} \sur{Köhldorfer}}\email{koehli@gmx.at}
\author[1]{\fnm{Peter} \sur{Balazs}}\email{peter.balazs@oeaw.ac.at}
\affil*[1]{\orgdiv{Acoustics Research Institute}, \orgname{Austrian Academy of Sciences}, \orgaddress{\street{Wohllebengasse 12-14}, \city{Vienna}, \postcode{1040}, \state{Vienna}, \country{Austria}}}
%
%%%==================================%%
%%% sample for unstructured abstract %%
%%%==================================%%
%
\abstract{Frames have been investigated frequently over the last few decades due to their valuable properties, which are desirable for various applications as well as interesting for theory. Some applications additionally require distributed processing techniques, which naturally leads to the concept of fusion frames and fusion frame systems.
The latter consists of a system of subspaces, equipped with local frames on each of them, and a global frame. \

In this paper, we investigate the relations of the associated frame-related operators on all those three levels. 
For that we provide a detailed investigation on bounded block diagonal operators between Hilbert direct sums. We give the relation of the frame-related operators of the fusion frame and the corresponding frame systems in terms of operator identities. By applying these identities we prove further properties of fusion frame systems.}
\keywords{fusion frame systems, frame-related operators, block diagonal operators, Hilbert direct sums}
%
%%%\pacs[JEL Classification]{D8, H51}
%
%%%\pacs[MSC Classification]{35A01, 65L10, 65L12, 65L20, 65L70}

\maketitle
\section{Introduction}\label{sec1}

Frames are sequences of elements in a (separable) Hilbert space, which provide the possibility to represent and reconstruct vectors in a non-unique, redundant and stable way \cite{duffschaef1,daubgromay86,ole1}. The study of frames led to many interesting theoretical results, such as versions of the \emph{Balian-Low theorem} \cite{HeWe_96,BeHeWal_95}, or \emph{Feichtinger's conjecture} \cite{CasChrLinVer_05,MarSpiSri_15,Gav13}. On the other hand, the properties of frames are desired for a vast number of applications, e.g. signal processing \cite{framepsycho16}, compressed sensing \cite{rascva06} and many more, see \cite{ole1} and the numerous references mentioned there. 

Fusion Frames have been first introduced as \emph{frames of subspaces} in order to give necessary and sufficient conditions for piecing local frames together to a global frame \cite{caskut04}. Since fusion frames generalize the notion of a (classical) frame, many concepts from frame theory can be transferred to the fusion frame setting \cite{caskut04,cakuli08,Asg_15}. However, the aspect of duality for fusion frames is non-trivial as it cannot be directly extended from the classical frame case, which makes the theory much more interesting \cite{kutpatphi17,Gav07,heimoanza14,hemo14,heimo18}. The idea of combining local frames on subspaces to a global frame and performing distributed processing procedures via frames, is modeled by the notion of a fusion frame system, which yields the flexibility to either perform \emph{global reconstruction} or \emph{local reconstruction} of a given signal \cite{cakuli08}. 

On all three layers - subspace, local and global frames - of a fusion frame system, the typical frame-related operators, like the analysis, synthesis and frame operator, can be defined. In this paper we study the relation between these operators associated to a fusion frame system. For that purpose we provide details for bounded block diagonal operators between Hilbert direct sums. In particular, we show that for those operators we get canonical combinations, like combining the fusion synthesis with the block diagonal operator of the local synthesis operators resulting in the global synthesis operator. We show that a global Riesz basis results in a fusion Riesz basis with local Riesz bases, a result where only the opposite direction is known so far. We can extend the answer to the question, when centralized and distributed reconstruction coincide, to the case of a fusion Riesz basis. Finally, we give a collection of results on properties, which are preserved in fusion frame systems. 

\

This paper is an extended version of \cite[Sections 3, 4.3, 4.4]{kohl21}.

\section{Preliminaries}\label{sec2}

Throughout this notes,  $\mathcal{H}$\index{$\mathcal{H}$} is always a separable Hilbert space. If $V$ is a subspace of $\mathcal{H}$ then $\pi _V$\index{$\pi _V$} denotes the orthogonal projection onto $V$. A Hilbert space $\mathcal{H}$ of functions $f:X \longrightarrow \mathbb{F}$ ($\mathbb{F} = \mathbb{R}$ or $\mathbb{F} = \mathbb{C}$) is called a reproducing kernel Hilbert space (RKHS), if all evaluation functionals $\delta_x$ on $\mathcal{H}$ are continuous, i.e. if for every every $x\in X$, there exists $C_x >0$ such that $\vert \delta_x (f) \vert := \vert f(x) \vert \leq C_x \Vert f \Vert$ for all  $f\in \mathcal{H}$ \cite{aro50, paulsen_raghupathi_2016}. The set of positive integers $\lbrace 1, 2, 3, \dots \rbrace$ is denoted by $\mathbb{N}$, and $\delta_{ij}$ denotes the Kronecker-delta. The domain, kernel and range of an operator $T$ is denoted by $\mathrm{dom}(T)$\index{$\mathrm{dom}(T)$}, $\mathcal{N}(T)$\index{$\mathcal{N}(T)$} and $\mathcal{R}(T)$\index{$\mathcal{R}(T)$} respectively.  $\mathcal{I}_X$\index{$\mathcal{I}_X$} denotes the identity operator on a given space $X$. The set of bounded operators between two normed spaces $X$ and $Y$ is denoted by $\mathcal{B}(X,Y)$\index{$\mathcal{B}(X,Y)$} and we set $\mathcal{B}(X) := \mathcal{B}(X,X)$. An operator $T:X \longrightarrow Y$ between normed spaces $X$ and $Y$ is called bounded from below by $m$ ($m>0$), if $m\Vert x \Vert_X \leq \Vert Tx \Vert_Y$ for all $x\in X$. For an operator $U \in \mathcal{B}( \mathcal{H}_1 , \mathcal{H}_2 )$ ($\mathcal{H}_1$, $\mathcal{H}_2$ Hilbert spaces) with closed range $\mathcal{R}(U)$, $U^{\dagger}$ denotes its associated \emph{pseudo inverse}\index{pseudo inverse}. Recall \cite{ole1} that the pseudo inverse of $U$ is characterized as the unique operator $U^{\dagger}: \mathcal{H}_2 \longrightarrow  \mathcal{H}_1$, satisfying the three relations 
\begin{equation}\label{pseudoinverse2}
\mathcal{N}(U^{\dagger}) = \mathcal{R}(U)^{\perp}, \, \, \, \, \mathcal{R}(U^{\dagger}) = \mathcal{N}(U)^{\perp}, \, \, \, \, UU^{\dagger}x = x \quad (x\in \mathcal{R}(U)).
\end{equation}
Moreover, if $U$ has closed range, then $U^*$ has closed range and $(U^*)^{\dagger} = (U^{\dagger})^*$. On $\mathcal{R}(U)$ we explicitly have $U^{\dagger} = U^* (UU^*)^{-1}$. In case $U$ is bounded and invertible, it holds $U^{\dagger} = U^{-1}$. 

\subsection{Frame Theory}

Recall \cite{ole1} that a \emph{frame} for $\mathcal{H}$ is a countable family $\psi = ( \psi_i )_{i\in I}$ in $\mathcal{H}$, for which there exist constants $0<A_{\psi} \leq B_{\psi} < \infty$, called the \emph{lower} and \emph{upper frame bound} respectively, such that for every $f\in \mathcal{H}$ it holds 
\begin{flalign}\label{frameinequ}
A_{\psi} \Vert f \Vert^2 \leq \sum_{i\in I} \vert \langle f, \psi_i \rangle \vert^2 \leq B_{\psi} \Vert f \Vert^2 .
\end{flalign} 
If the frame bounds $A_{\psi}$ and $B_{\psi}$ can be chosen to be equal, then $\psi$ is called an $A_{\psi}$\emph{-tight} or simply \emph{tight} frame. A $1$-tight frame is called \emph{Parseval} frame. If a sequence $\psi = ( \psi_i )_{i\in I}$ satisfies the right-hand inequality, but not necessarily the left-hand inequality in (\ref{frameinequ}), then it is called a \emph{Bessel sequence} with Bessel bound $B_{\psi}$. A sequence $\psi = ( \psi_i )_{i\in I}$ in $\mathcal{H}$ is called a \emph{Riesz basis}\index{Riesz basis}, if it is complete, i.e. $\overline{span}(\psi_i )_{i\in I} = \mathcal{H}$, and if there exist constants $0< \alpha_{\psi} \leq \beta_{\psi} < \infty $, called \emph{lower} and \emph{upper Riesz bound}\index{Riesz bound} respectively, such that for any finite scalar sequence $( c_j )_{j\in J}$ ($J\subseteq I$) we have
$$\alpha_{\psi} \sum _{j\in J} \vert c_j \vert ^2 \leq  \bigg\Vert \sum _{j\in J} c_j \psi_j \bigg\Vert ^2 \leq \beta_{\psi} \sum _{j\in J} \vert c_j \vert ^2 .$$

For an arbitrary sequence $\psi = ( \psi_i )_{i\in I}$ in $\mathcal{H}$, we consider its associated frame-related operators \cite{xxlstoeant11}:
\begin{itemize}
    \item The \emph{synthesis operator} 
    $D_{\psi} : \text{dom}(D_{\psi}) \subseteq \ell ^2 (I) \longrightarrow  \mathcal{H}$, where $\text{dom}(D_{\psi}) = \big\lbrace (c_i)_{i\in I} \in \ell^2 (I): \sum_{i\in I} c_i \psi_i \in \mathcal{H}\big\rbrace$ and $D_{\psi} ( c_i )_{i\in I} = \sum_{i\in I} c_i \psi_i$.
    \item The \emph{analysis operator} $C_{\psi} : \text{dom}(C_{\psi}) \subseteq \mathcal{H} \longrightarrow \ell ^2 (I)$, where $\text{dom}(C_{\psi}) = \lbrace f \in \mathcal{H}:  (\langle f, \psi_i \rangle )_{i\in I} \in \ell^2 (I) \rbrace$ and $C_{\psi} f = (\langle f, \psi_i \rangle )_{i \in I}$.
    \item The \emph{frame operator} $S_{\psi}: \text{dom}(S_{\psi}) \subseteq \mathcal{H} \longrightarrow \mathcal{H}$, where $\text{dom}(S_{\psi}) = \big\lbrace f \in \mathcal{H}: \sum_{i\in I} \langle f, \psi_i \rangle \psi_i \in \mathcal{H} \big\rbrace$ and $S_{\psi} f = \sum_{i\in I} \langle f, \psi_i \rangle \psi_i$.  
\end{itemize}
One can show \cite{ole1,xxlstoeant11} that a sequence $\psi = (\psi_i )_{i\in I} $ is a Bessel sequence with Bessel bound $B_{\psi}$ if and only if $\text{dom}(D_{\psi}) = \ell^2(I)$ and $D_{\psi}$ is bounded by $\sqrt{B_{\psi}}$. In that case, we also have  $D^*_{\psi} = C_{\psi} \in \mathcal{B}( \mathcal{H}, \ell^2 (I))$ and $S_{\psi} = D_{\psi}C_{\psi} \in \mathcal{B}(\mathcal{H})$ with $\Vert C_{\psi} \Vert \leq \sqrt{B_{\psi}}$ and $\Vert S_{\psi} \Vert \leq B_{\psi}$, whereas $S_{\psi}$, obviously, is self-adjoint. If $\psi$ is a frame, its associated frame operator $S_{\psi}$,  additionally, is positive and invertible, yielding the possibility of \emph{frame reconstruction} for all $f\in \mathcal{H}$ via 
\begin{flalign}\label{framerec}
f = \sum_{i\in I} \langle f, S_{\psi} ^{-1} \psi_i \rangle \psi_i = \sum_{i\in I} \langle f, \psi_i \rangle S_{\psi} ^{-1} \psi_i .
\end{flalign} 
The sequence $( \tilde{\psi}_i )_{i\in I} := (S_{\psi} ^{-1} \psi_i )_{i\in I}$ is also a frame for $\mathcal{H}$ with frame bounds $B_{\psi}^{-1} \leq A_{\psi}^{-1}$ and associated frame operator $S_{\tilde{\psi}} = S_{\psi}^{-1}$, and is referred to as the \emph{canonical dual frame} of $\psi$. More generally, a frame $\phi = (\phi_i )_{i\in I}$ is called a \emph{dual frame} of $\psi$, if 
for all $f\in \mathcal{H}$
\begin{flalign}
f = \sum_{i\in I} \langle f, \varphi_i \rangle \psi_i = \sum_{i\in I} \langle f, \psi_i \rangle \varphi_i .\notag
\end{flalign}

A frame $\psi$ is $A_{\psi}$-tight if and only if $S_{\psi}=A_{\psi} \cdot \mathcal{I}_{\mathcal{H}}$ \cite{ole1}. Therefore, frame reconstruction becomes particularly simple for tight frames, because (\ref{framerec}) then reduces to $f = \frac{1}{A_{\psi}} \sum_{i\in I} \langle f, \psi_i \rangle \psi_i$. 

More generally, if $\mathcal{H}$ is a RKHS, we say that a frame $\psi$ is \emph{painless} (compare to \emph{painless non-stationary Gabor frames} \cite{nsdgt10}), if $S_{\psi}$ is given by a multiplication operator, i.e. $S_{\psi} f (x) = m(x)f(x)$ for some function $m:X \longrightarrow \mathbb{F}$. In particular, the frame inequalities (\ref{frameinequ}) imply $A_{\psi} \leq m(x) \leq B_{\psi}$ ($x\in X$). Moreover, frame reconstruction remains 'painless' in this case, as it simply corresponds to point-wise multiplication, $S_{\psi}^{-1} f (x) = \frac{1}{m(x)} f(x)$ ($x \in X$). 

Frames and Riesz bases can be characterized in terms of their associated synthesis and analysis operators, as presented in the statement below, see also \cite{ole1}. For a more detailed study of the frame-related operators associated to general sequences, which can be unbounded operators, we refer to \cite{xxlstoeant11}. 

\newtheorem{framechar}{Theorem}[section]

\begin{framechar}\label{framechar} \cite{ole1}
Let $\psi = ( \psi_i ) _{i\in I}$ be a countable family of vectors in $\mathcal{H}$. Then the following are equivalent.
\begin{itemize}
\item[(i)] $\psi$ is a frame (resp. Riesz basis) for $\mathcal{H}$.
\item[(ii)] The synthesis operator $D_{\psi}$ is bounded and surjective (resp. bounded and bijective).
\item[(iii)] The analysis operator $C_{\psi}$ is bounded, injective and has closed range (resp. bounded and bijective).
\end{itemize}
\end{framechar}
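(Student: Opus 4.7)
The plan is to exploit the adjoint identity $C_\psi = D_\psi^*$, which is available as soon as $\psi$ is known to be Bessel (as recalled in the discussion immediately preceding the theorem), together with the standard Hilbert-space duality principle: for a bounded operator $T \in \mathcal{B}(X,Y)$ between Hilbert spaces, $T$ is surjective if and only if $T^*$ is bounded below, which is in turn equivalent to $T^*$ being injective with closed range. With this in hand, the theorem reduces to translating the frame and Riesz inequalities into operator-theoretic assertions and invoking this duality.

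For the frame part, I would first establish (i) $\Rightarrow$ (ii): the upper frame inequality is exactly the Bessel condition, so $D_\psi$ is bounded and $D_\psi^* = C_\psi$; the lower inequality $A_\psi\|f\|^2 \leq \|C_\psi f\|^2$ then says that $C_\psi$ is bounded below, whence its adjoint $D_\psi$ is surjective. The equivalence (ii) $\Leftrightarrow$ (iii) follows by a direct application of the duality principle above (boundedness transfers to the adjoint, and surjectivity of $D_\psi$ corresponds to $C_\psi$ being bounded below, equivalently injective with closed range), and (iii) $\Rightarrow$ (i) is obtained by reversing the same translations, reading off the upper frame bound from $\|C_\psi\|$ and the lower frame bound from the lower bound provided by the closed range theorem.

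For the Riesz basis case, the subtlety is that the Riesz inequalities are only assumed on finite scalar sequences. The plan is to extend them to all of $\ell^2(I)$ by density and continuity: the upper inequality yields that $D_\psi$ is bounded on the finitely supported sequences and hence extends uniquely to a bounded operator on $\ell^2(I)$, so in particular $\mathrm{dom}(D_\psi) = \ell^2(I)$ and the Bessel property holds. The lower inequality then passes to $\ell^2(I)$ by continuity, making $D_\psi$ bounded below, hence injective with closed range. Completeness of $(\psi_i)_{i\in I}$ yields that $\mathcal{R}(D_\psi)$ is dense, and combined with closedness this gives surjectivity, so $D_\psi$ is bijective. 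Conversely, a bijective bounded $D_\psi$ trivially gives Bessel, while the open mapping theorem supplies $D_\psi^{-1}$ bounded, which produces the lower Riesz bound on arbitrary (in particular finite) sequences; surjectivity gives completeness. The equivalence with (iii) is then immediate, since $D_\psi$ is bijective if and only if $C_\psi = D_\psi^*$ is bijective. The main obstacle is not mathematical depth but careful bookkeeping: one must track at each stage when $\mathrm{dom}(D_\psi) = \ell^2(I)$ is already secured, in order to avoid circular reasoning when passing between synthesis on finitely supported sequences and on all of $\ell^2(I)$.
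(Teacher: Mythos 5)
The paper gives no proof of this theorem---it is quoted verbatim from the cited reference \cite{ole1}---so there is nothing to compare against except the standard textbook argument, which is exactly what you reproduce. Your proof is correct: the reduction to the duality principle ($T$ surjective $\Leftrightarrow$ $T^*$ bounded below $\Leftrightarrow$ $T^*$ injective with closed range), together with the identification $C_\psi = D_\psi^*$ once the Bessel property is secured, and the density/continuity extension of the Riesz inequalities from finitely supported sequences to all of $\ell^2(I)$, is precisely the argument in \cite{ole1}, and you correctly flag the one point requiring care (establishing $\mathrm{dom}(D_\psi)=\ell^2(I)$ before invoking the adjoint identity).
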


%\newtheorem{Rieszbasischar}[framechar]{Theorem}

%\begin{Rieszbasischar}\label{Rieszbasischar} \cite{ole1}
%Let $\psi =\lbrace \psi_i \rbrace _{i\in I}$ be a sequence in $\mathcal{H}$. Then the following are equivalent.
%\begin{itemize}
%\item[(i)] $\psi$ is a Riesz basis for $\mathcal{H}$.
%\item[(ii)] The synthesis operator $D_{\psi}$ is bounded and bijective.
%\item[(iii)] The analysis operator $C_{\psi}$ is bounded and bijective.
%\end{itemize}
%\end{Rieszbasischar}

In particular, for any frame $\psi$, the pseudo inverses of $D_{\psi}$ and $C_{\psi}$ are well-defined. In \cite{ole1} it is shown that 
\begin{flalign}
D_{\psi}^{\dagger} = C_{\psi} S_{\psi}^{-1}, \notag
\end{flalign}
which implies 
\begin{flalign}
C_{\psi}^{\dagger} = S_{\psi}^{-1} D_{\psi} .\notag
\end{flalign}
In particular, if $\psi$ is a Riesz basis, we have 
\begin{flalign}
D_{\psi}^{-1} = C_{\psi} S_{\psi}^{-1}, \quad C_{\psi}^{-1} = S_{\psi}^{-1} D_{\psi} .\notag
\end{flalign}

\subsection{Fusion Frame Theory}

\newtheorem{theorem1}{Theorem}[section]
\newtheorem{corollary}[theorem1]{Corollary}
\newtheorem{proposition1}[theorem1]{Proposition}
\newtheorem{lemma}[theorem1]{Lemma}
\newtheorem{definition1}[theorem1]{Definition}
\newtheorem{remark1}[theorem1]{Remark}
\newtheorem{fusionframechar}[framechar]{Theorem}
\newtheorem{fusionRieszbasischar}[framechar]{Theorem}

By rewriting the terms $\vert \langle f, \psi_i \rangle \vert^2$ from the frame inequalities (\ref{frameinequ}), 
\begin{flalign}
\vert \langle f,  \psi_i \rangle \vert ^2 &= \Vert \psi_i \Vert ^2 \Big\langle \langle f, \frac{\psi_i}{\Vert \psi_i \Vert} \rangle \frac{\psi_i}{\Vert \psi_i \Vert}, f \Big\rangle \notag \\
&= \Vert \psi_i \Vert ^2 \langle \pi _{V_i} f, f\rangle \notag \\
&= v_i ^2 \Vert \pi _{V_i} f \Vert ^2, \notag
\end{flalign}
where we set $V_i:= \overline{span}\lbrace \psi_i\rbrace$ and $v_i := \Vert \psi_i \Vert$, we see that frames can also be viewed as weighted sequences of ($1$-dimensional) closed subspaces (resp. orthogonal projections), satisfying the inequalities (\ref{frameinequ}). Admitting higher dimensional subspaces leads to the notion of a fusion frame and allows to extend many definitions from classical frame theory to this setting. 

Recall \cite{caskut04}, that a countable sequence $V = (V_i , v_i )_{i\in I}$ of closed subspaces $V_i$ of $\mathcal{H}$, together with weights $v_i >0$, is called a \emph{fusion frame} (or \emph{frame of subspaces}) for $\mathcal{H}$, if there exist constants $0< A_V \leq B_V < \infty$, called \emph{lower} and \emph{upper fusion frame bound} respectively, such that for every $f\in \mathcal{H}$ it holds 
\begin{flalign}\label{fusframeinequ}
A_V \Vert f \Vert^2 \leq \sum_{i\in I} v_i^2 \Vert \pi_{V_i} f \Vert^2 \leq B_V \Vert f \Vert^2 .
\end{flalign}
A fusion frame $V$ is called $A_V$\emph{-tight}, if the fusion frame bounds $A_V$ and $B_V$ can be chosen to be equal. A $1$-tight fusion frame is called \emph{Parseval} fusion frame. If $V$ as above satisfies the right-hand inequality, but not necessarily the left-hand inequality in (\ref{fusframeinequ}), we call $V$ a \emph{Bessel fusion sequence} with \emph{Bessel fusion bound} $B_V$. A weighted family of closed subspaces $V = (V_i , v_i )_{i\in I}$ is called a \emph{fusion Riesz basis}, if $\overline{span}(V_i )_{i\in I} = \mathcal{H}$ and if there exist constants $0< \alpha_V \leq \beta_V < \infty $, called \emph{lower} and \emph{upper fusion Riesz bound} respectively, such that for any finite subset $J\subseteq I$ we have 
\begin{flalign}
\alpha_V \sum _{j\in J} \Vert f_j \Vert ^2 \leq  \bigg\Vert \sum _{j\in J} v_j f_j \bigg\Vert ^2 \leq \beta_V \sum _{j\in J} \Vert f_j \Vert ^2 \notag
\end{flalign}
for all sequences $(f_j )_{j\in J} \in (V_j)_{j\in J}$. The family $(V_i)_{i\in I}$ is called an \emph{orthonormal fusion basis} for $\mathcal{H}$, if $\mathcal{H}$ is the orthogonal direct sum of the spaces $V_i$, i.e. $\mathcal{H} = \oplus_{i\in I} V_i$.

The appropriate representation spaces for the fusion frame setting are \emph{Hilbert direct sums}, defined by
\begin{flalign}
\big(\sum _{i\in I} \oplus V_i \big)_{\ell ^2} = \Big\lbrace (f_i )_{i \in I}: f_i \in V_i, \sum_{i\in I} \Vert f_i \Vert^2 < \infty \Big\rbrace . \notag
\end{flalign}

As in the classical frame setting, for any weighted sequence of closed subspaces $V=(V_i, v_i)_{i\in I}$ we consider its associated fusion-frame-related operators:
\begin{itemize}
    \item The \emph{synthesis operator} $D_V :\text{dom}(D_V) \subseteq \big(\sum _{i\in I} \oplus V_i \big)_{\ell ^2} \longrightarrow \mathcal{H}$, where $\text{dom}(D_V) = \big\lbrace (f_i)_{i\in I} \in \big(\sum _{i\in I} \oplus V_i \big)_{\ell ^2} : \sum_{i \in I} v_i f_i \in \mathcal{H} \big\rbrace$ and $D_V (f_i )_{i \in I} = \sum_{i \in I} v_i f_i$,
    \item The \emph{analysis operator} 
    $C_V : \text{dom}(C_V) \subseteq \mathcal{H} \longrightarrow \big( \sum _{i\in I} \oplus V_i \big) _{\ell ^2}$, where $\text{dom}(C_V) = \big\lbrace f \in \mathcal{H}:  ( v_i \pi _{V_i} f )_{i \in I} \in \big(\sum _{i\in I} \oplus V_i \big)_{\ell ^2} \big\rbrace$ and $C_V f = ( v_i \pi _{V_i} f )_{i \in I}$,
    \item The \emph{fusion frame operator}
    $S_V : \text{dom}(S_V) \subseteq \mathcal{H} \longrightarrow \mathcal{H}$, where $\text{dom}(S_V) = \big\lbrace f \in \mathcal{H}: \sum_{i\in I} v_i ^2 \pi_{V_i} f \in \mathcal{H} \big\rbrace$ and $S_V f = \sum_{i\in I} v_i ^2 \pi_{V_i} f$. 
\end{itemize}
It is well-known \cite{caskut04} that $V=(V_i, v_i)_{i\in I}$ is a Bessel fusion sequence with Bessel fusion bound $B_V$ if and only if $\text{dom}(D_V) = (\sum _{i\in I} \oplus V_i )_{\ell ^2}$ and $D_V$ is bounded by $\sqrt{B_V}$. Here, the basic definitions are still analogous to (Hilbert) frames, we again have $D^*_V = C_V \in \mathcal{B}(\mathcal{H}, (\sum _{i\in I} \oplus V_i )_{\ell ^2})$ and $S_V = D_V C_V \in \mathcal{B}(\mathcal{H})$ being self-adjoint. The latter is additionally positive and invertible, if $V$ is a fusion frame. In particular, fusion frames also yield the possibility of perfect reconstruction via 
\begin{flalign}
f = \sum_{i\in I} v_i ^2 \pi_{V_i} S_V ^{-1} f = \sum_{i\in I} v_i ^2 S_V ^{-1} \pi_{V_i} f.\notag
\end{flalign}
It can be shown that a fusion frame $V$ is $A_V$-tight if and only if $S_V = A_V\cdot \mathcal{I}_{\mathcal{H}}$, see \cite{kohl21} for more details. 

Moreover, if $\mathcal{H}$ is a RKHS, we call a fusion frame $V$ for $\mathcal{H}$ \emph{painless} \cite{daubgromay86,gr01,nsdgt10}, if $S_V$ is given by a multiplication operator, i.e. $S_V f (x) = m(x)f(x)$ for some function $m:X \longrightarrow \mathbb{F}$, which necessarily satisfies $A_{\psi} \leq m \leq B_{\psi}$ and $S_V^{-1} f (x) = \frac{1}{m(x)} f(x)$ ($x \in X$).

\

Unsurprisingly, fusion frames and fusion Riesz bases can be characterized in terms of their associated synthesis and analysis operators, as in the classical frame case (Theorem \ref{framechar}):
\begin{fusionframechar}\label{fusionframecar}  \cite{caskut04}
Let $V=(V_i , v_i )_{i\in I}$ be a sequence of closed subspaces in $\mathcal{H}$ with weights $v_i >0$. Then the following are equivalent.
\begin{itemize}
\item[(i)] $V$ is a fusion frame (resp. fusion Riesz basis) for $\mathcal{H}$. 
\item[(ii)] The synthesis operator $D_V$ is bounded and surjective (resp. bounded and bijective).
\item[(iii)] The analysis operator $C_V$ is bounded, injective and has closed range (resp. bounded and bijective).
\end{itemize}
\end{fusionframechar}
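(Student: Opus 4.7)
The approach is to mimic the proof of the classical analogue Theorem \ref{framechar}, using the adjoint relation $D_V^{*}=C_V$ together with the identity
$$\Vert C_V f\Vert^2 = \sum_{i\in I} v_i^2 \Vert \pi_{V_i} f\Vert^2 \qquad (f\in \mathrm{dom}(C_V)),$$
which converts the fusion frame inequalities (\ref{fusframeinequ}) directly into operator-norm estimates on $C_V$. Throughout, the upper bound is interpreted as boundedness and the lower bound as boundedness from below, with the passage between $D_V$ and $C_V$ handled by Hilbert adjoints.

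For the fusion frame case I would first observe that the upper inequality in (\ref{fusframeinequ}) is, by the Bessel characterisation recalled just after the definition of the fusion-frame-related operators, equivalent to $D_V$ being everywhere defined and bounded with $\Vert D_V\Vert\leq \sqrt{B_V}$, and hence by duality to $C_V\in\mathcal{B}(\mathcal{H},(\sum_{i\in I}\oplus V_i)_{\ell^2})$ with the same norm bound. Once $C_V$ is bounded on all of $\mathcal{H}$, the lower inequality $A_V\Vert f\Vert^2 \leq \Vert C_V f\Vert^2$ is precisely the statement that $C_V$ is bounded below by $\sqrt{A_V}$, which for bounded Hilbert space operators is equivalent to $C_V$ being injective with closed range. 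This yields (i)$\Leftrightarrow$(iii). The equivalence (ii)$\Leftrightarrow$(iii) then follows from the standard principle that a bounded operator $T$ between Hilbert spaces is surjective iff its adjoint is bounded below, applied to $T=D_V$, $T^{*}=C_V$.

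For the fusion Riesz basis case I would first use the upper Riesz inequality on finitely supported sequences, together with density of such sequences in $(\sum_{i\in I}\oplus V_i)_{\ell^2}$, to extend $D_V$ to a bounded operator with $\Vert D_V\Vert\leq \sqrt{\beta_V}$, and then extend the lower inequality by continuity to $\sqrt{\alpha_V}\Vert(f_i)\Vert\leq \Vert D_V(f_i)\Vert$ on the entire direct sum. This forces $D_V$ to be injective with closed range. The completeness condition $\overline{\mathrm{span}}(V_i)_{i\in I} = \mathcal{H}$ is equivalent to $\mathcal{R}(D_V)$ being dense in $\mathcal{H}$, since every $v_j f_j$ with $f_j\in V_j$ lies in $\mathcal{R}(D_V)$, and combined with closedness of the range this yields bijectivity. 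Conversely, if $D_V$ is bijective, the operator bounds of $D_V$ on finitely supported sequences immediately recover the Riesz inequalities and surjectivity gives completeness. The equivalence (ii)$\Leftrightarrow$(iii) in the Riesz case finally reduces to the Banach isomorphism fact that a bounded Hilbert space operator is bijective iff its adjoint is.

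The main obstacle I expect is book-keeping rather than any new idea: the operators $D_V$ and $C_V$ are a priori only densely defined, so one has to track carefully that under each hypothesis they extend to bounded operators on the whole direct sum (resp.\ the whole space) with the claimed bounds, and that the finitely-supported Riesz inequalities extend by continuity without shrinkage of the constants. Particular attention is needed in the Riesz case to extract from the Riesz lower bound and completeness that $D_V$ is bounded \emph{below} on the full Hilbert direct sum, rather than merely bounded below on finite sequences, before invoking the closed-range argument that delivers surjectivity.
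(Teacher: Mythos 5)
The paper does not prove this theorem at all: it is quoted from \cite{caskut04} without proof, exactly as its classical counterpart Theorem \ref{framechar} is quoted from \cite{ole1}, so there is no in-paper argument to compare against. Your proof is correct and is the standard one (and essentially the argument of the cited reference): the identity $\Vert C_V f\Vert^2=\sum_{i\in I}v_i^2\Vert\pi_{V_i}f\Vert^2$ translates the fusion frame inequalities into boundedness and boundedness from below of $C_V$, and the equivalences \emph{bounded below $\Leftrightarrow$ injective with closed range} and \emph{$T$ surjective $\Leftrightarrow$ $T^*$ bounded below} give (i)$\Leftrightarrow$(iii)$\Leftrightarrow$(ii); the Riesz case follows from the density of finitely supported sequences in $\big(\sum_{i\in I}\oplus V_i\big)_{\ell^2}$ together with $\overline{\mathcal{R}(D_V)}=\overline{\mathrm{span}}(V_i)_{i\in I}$. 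No gaps.
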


In particular, for any fusion frame $V$ it holds
\begin{flalign}
D_V^{\dagger} = C_V S_V^{-1}, \quad
C_V^{\dagger} = S_V^{-1} D_V .\notag
\end{flalign}
In case $V$ is a fusion Riesz basis, we have 
\begin{equation}\label{pseudoinversefusionframe2}
D_V^{-1} = C_V S_V^{-1}, \quad C_V^{-1} = S_V^{-1} D_V .
\end{equation}

For a fusion Riesz basis $V$, we can give a simpler formula for $D_V^{-1}$ than in (\ref{pseudoinversefusionframe2}), when we restrict $D_V^{-1}$ to the subspace $V_i$. By setting (for $i\in I$)
\begin{flalign}
\mathcal{V}_i := ... \times \lbrace 0 \rbrace \times \lbrace 0 \rbrace \times V_i \times \lbrace 0 \rbrace \times \lbrace 0 \rbrace \times ...  \quad \text{($V_i$ at the $i$-th position),} \notag
\end{flalign}\index{$\mathcal{V}_n$}
we can formulate the following preparatory result.

\newtheorem{inverseoperators}[framechar]{Lemma}

\begin{inverseoperators}\label{inverseoperators}
Let $V = (V_i , v_i)_{i\in I}$ be a fusion Riesz basis. Let $i\in I$ be arbitrary and $g_i \in V_i$. Then 
\begin{equation}\label{inverseVi}
D_V^{-1} g_i = (..., 0, 0, v_i^{-1}g_i, 0, 0, ...) \quad  \text{  ($v_i^{-1}g_i$ in the $i$-th entry)} \notag 
\end{equation}
In other words, $D_V^{-1}$ maps $V_i$ into $\mathcal{V}_i$ for every $i\in I$.
\end{inverseoperators}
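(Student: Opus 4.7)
The plan is a direct verification. By Theorem \ref{fusionframecar}, because $V$ is a fusion Riesz basis, the synthesis operator $D_V$ is a bounded bijection from $\bigl(\sum_{j\in I}\oplus V_j\bigr)_{\ell^2}$ onto $\mathcal{H}$, so $D_V^{-1}g_i$ is well-defined. Hence it suffices to exhibit an element $c \in \bigl(\sum_{j\in I}\oplus V_j\bigr)_{\ell^2}$ with $D_V c = g_i$ and invoke injectivity.

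Concretely, I would fix $i \in I$ and $g_i \in V_i$, and define $c = (c_j)_{j\in I}$ by $c_j = \delta_{ij}\, v_i^{-1} g_i$, that is, the sequence with $v_i^{-1}g_i$ at position $i$ and $0$ elsewhere. Since only one entry is nonzero and $v_i^{-1}g_i \in V_i$, one has $\sum_{j\in I}\|c_j\|^2 = v_i^{-2}\|g_i\|^2 < \infty$, so $c$ lies in the Hilbert direct sum. Applying the formula for $D_V$ from the preliminaries gives
\begin{equation*}
D_V c = \sum_{j\in I} v_j c_j = v_i \cdot v_i^{-1} g_i = g_i.
\end{equation*}
Because $D_V$ is injective by Theorem \ref{fusionframecar}, this forces $D_V^{-1}g_i = c$, which is exactly the claimed identity. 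The statement that $D_V^{-1}$ maps $V_i$ into $\mathcal{V}_i$ is then immediate from the definition of $\mathcal{V}_i$.

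There is essentially no obstacle here; the only point that requires a moment's care is checking membership of $c$ in the Hilbert direct sum and recalling the summation formula defining $D_V$ on general (not necessarily finitely supported) elements, both of which are immediate for a sequence with a single nonzero coordinate. The result is a convenient stepping stone that avoids having to reroute $D_V^{-1}$ through the identity $D_V^{-1} = C_V S_V^{-1}$ from \eqref{pseudoinversefusionframe2} whenever one only needs to invert on the subspaces $V_i$.
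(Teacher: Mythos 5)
Your proof is correct and follows essentially the same route as the paper: both use the bijectivity of $D_V$ from Theorem \ref{fusionframecar} and exhibit the sequence with $v_i^{-1}g_i$ in the $i$-th slot as the unique preimage of $g_i$. You merely spell out the membership check and the computation $D_V c = v_i\cdot v_i^{-1}g_i = g_i$ that the paper dismisses as obvious.
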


\begin{proof}
Let $g_i \in V_i$. According to Theorem \ref{fusionframecar}, there exists precisely one $g \in \big(\sum _{i\in I} \oplus V_i \big)_{\ell ^2}$, such that $D_V g = g_i$. Obviously, $g = (..., 0, 0, v_i^{-1} g_i, 0, 0, ...)$ ($g_i$ in the $i$-th entry) meets this condition.
\end{proof}

The following result from \cite{caskut04} can be viewed as the starting point of the theory of fusion frames. It indicates the link between fusion frames and distributed processing. 
%For the reader's convenience, we provide the proof.

\newtheorem{start}[framechar]{Theorem}

\begin{start}\label{start}  \cite{caskut04}
Let $( V_i) _{i \in I}$ be a family of closed subspaces of $\mathcal{H}$, $(v_i)_{i \in I}$ a family of weights and assume that for every $i\in I$, $\varphi ^{(i)} := ( \varphi _{ij} )_{j\in J_i}$ is a frame for $V_i$ with frame bounds $A_i$ and $B_i$. Suppose that there exist constants $A$ and $B$ such that $0<A = \inf_{i\in I} A_i \leq \sup_{i \in I} B_i = B < \infty$. Then the following are equivalent.
\begin{itemize}
\item[(i)] $V=(V_i , v_i)_{i \in I}$ is a fusion frame for $\mathcal{H}$. 
\item[(ii)] $v\varphi = (v_i \varphi_{ij})_{i\in I, j\in J_i}$ is a frame for $\mathcal{H}$.
\end{itemize}
In particular, if $V$ is a fusion frame with fusion frame bounds $A_V$ and $B_V$, then $A_{v\varphi} = A_V\cdot A$ and $B_{v\varphi} = B_V\cdot B$ are frame bounds for $v\varphi$. Conversely, if $v\varphi$ is a frame with frame bounds $A_{v\varphi}$ and $B_{v\varphi}$, then $A_V = A_{v\varphi}/B$ and $B_V = B_{v\varphi}/A$ are fusion frame bounds for $V$. 
\end{start}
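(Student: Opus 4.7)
The plan is to reduce the global frame sum to the fusion frame sum via a sandwich inequality obtained by applying each local frame inequality to the orthogonal projection of $f$ onto the corresponding subspace. The central observation is that since $\varphi_{ij} \in V_i$ and $\pi_{V_i}$ is self-adjoint, we have $\langle f, \varphi_{ij}\rangle = \langle \pi_{V_i} f, \varphi_{ij}\rangle$ for every $f \in \mathcal{H}$, so the local frame bounds can be applied to $\pi_{V_i} f \in V_i$ instead of to $f$ itself.

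First I would fix $i \in I$ and $f \in \mathcal{H}$ and apply the frame inequality for $\varphi^{(i)}$ to the vector $\pi_{V_i} f \in V_i$, which together with the observation above yields
\begin{flalign}
A_i \Vert \pi_{V_i} f \Vert^2 \leq \sum_{j \in J_i} \vert \langle f, \varphi_{ij} \rangle \vert^2 \leq B_i \Vert \pi_{V_i} f \Vert^2 . \notag
\end{flalign}
Multiplying by $v_i^2$, invoking the uniform bounds $A \leq A_i \leq B_i \leq B$, and summing over $i \in I$ then gives the master inequality
\begin{flalign}
A \sum_{i \in I} v_i^2 \Vert \pi_{V_i} f \Vert^2 \leq \sum_{i \in I} \sum_{j \in J_i} \vert \langle f, v_i \varphi_{ij} \rangle \vert^2 \leq B \sum_{i \in I} v_i^2 \Vert \pi_{V_i} f \Vert^2 . \notag
\end{flalign}
This sandwich, in which $A, B > 0$ are finite constants, already contains the entire content of the equivalence.

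The remainder is bookkeeping. For (i)$\Rightarrow$(ii), I would chain the fusion frame inequality $A_V \Vert f\Vert^2 \leq \sum_i v_i^2 \Vert \pi_{V_i} f\Vert^2 \leq B_V \Vert f\Vert^2$ into the sandwich, producing the global frame bounds $A\cdot A_V$ and $B\cdot B_V$. For (ii)$\Rightarrow$(i), I would run the sandwich the other way: the lower bound of $v\varphi$ combined with the right-hand side of the sandwich yields $\sum_i v_i^2 \Vert \pi_{V_i} f\Vert^2 \geq A_{v\varphi}/B\, \Vert f\Vert^2$, and the upper bound of $v\varphi$ combined with the left-hand side yields $\sum_i v_i^2 \Vert \pi_{V_i} f\Vert^2 \leq B_{v\varphi}/A\, \Vert f\Vert^2$.

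There is no serious obstacle here; the only conceptual point to keep in mind is that each $\varphi^{(i)}$ is a frame only for $V_i$, not for $\mathcal{H}$, which is precisely why the projection identity $\langle f, \varphi_{ij}\rangle = \langle \pi_{V_i} f, \varphi_{ij}\rangle$ is indispensable. The uniformity hypothesis $0 < A \leq B < \infty$ is what keeps the sandwich nondegenerate when passing to the sum over $i$, so it should be explicitly invoked at the summation step.
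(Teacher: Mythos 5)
Your argument is correct. The paper itself contains no proof of this statement --- it is quoted directly from \cite{caskut04} --- and your sandwich argument, built on the projection identity $\langle f,\varphi_{ij}\rangle=\langle \pi_{V_i}f,\varphi_{ij}\rangle$ followed by termwise application of the uniform local bounds $A\leq A_i\leq B_i\leq B$ and summation of nonnegative series, is precisely the standard proof from that reference, with all steps (including the extraction of the stated bounds in both directions) sound.
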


%\begin{proof}
%We perform frame reconstruction via the frames $\varphi ^{(i)}$ and see that \begin{align}
%A\sum_{i\in I} v_i ^2 \Vert \pi_{V_i} f \Vert ^2 &\leq \sum_{i\in I} A_i v_i ^2 \Vert \pi_{V_i} f \Vert ^2 \notag \\
%&\leq \sum_{i\in I} \sum_{j\in J_i} \vert \langle \pi_{V_i} f, v_i f_{ij} \rangle \vert ^2  \notag \\
%&\leq \sum_{i\in I} B_i v_i ^2 \Vert \pi_{V_i} f \Vert ^2 \leq B\sum_{i\in I} v_i ^2 \Vert \pi_{V_i} f \Vert ^2 . \notag
%\end{align}
%Moreover, we also have 
%$$ \sum_{i\in I} \sum_{j\in J_i} \vert \langle \pi_{V_i} f, v_i f_{ij} \rangle \vert ^2 = \sum_{i\in I} \sum_{j\in J_i} \vert \langle f, v_i f_{ij} \rangle \vert ^2 .$$
%Thus, if we assume that $(V_i , v_i)_{i \in I}$ is a fusion frame for $\mathcal{H}$ with bounds $C \leq D$, then we obtain
%$$AC \Vert f \Vert ^2 \leq \sum_{i\in I} \sum_{j\in J_i} \vert \langle f, v_i f_{ij} \rangle \vert ^2 \leq BD \Vert f \Vert ^2 $$ for all $f\in \mathcal{H}$.
%Conversely, if $(v_i \varphi_{ij})_{i\in I, j\in J_i}$ is a frame for $\mathcal{H}$ with frame bounds $C \leq D$, then 
%$$\frac{C}{B} \Vert f \Vert ^2 \leq \sum_{i\in I} v_i ^2 \Vert \pi_{V_i} f \Vert ^2 \leq \frac{D}{A} \Vert f \Vert ^2$$ for all $f\in \mathcal{H}$.
%\end{proof}

Theorem \ref{start} shows that the three frame layers -subspaces, local sequences, global sequence- are naturally connected. This motivates the notion of a \emph{fusion frame system}.

\newtheorem{fusfrsysdef}[framechar]{Definition }

\begin{fusfrsysdef} \cite{cakuli08}
Let $(V_i , v_i )_{i \in I}$ be a fusion frame for $\mathcal{H}$ and let $\varphi^{(i)} = (\varphi_{ij})_{j \in J_i}$ be a frame for $V_i$ for every $i \in I$. If the frames $\varphi^{(i)}$ have common frame bounds, then we call $(V_i , v_i , \varphi^{(i)})_{i\in I}$ a \emph{fusion frame system}\index{fusion frame system} for $\mathcal{H}$. Furthermore, we call its associated frame $v \varphi := (v_i \varphi_{ij})_{i\in I, j\in J_i}$ \emph{global frame}\index{global frame}, and the frames $\varphi^{(i)}$ \emph{local frames}\index{local frame}. We will always denote the common frame bounds of the local frames by $A$ and $B$ ($A\leq B$).
\end{fusfrsysdef}

%Whenever necessary, we call the frame bounds for the local frames $\varphi^{(i)}$ \emph{local frame bounds}\index{local frame bound}, and the frame bounds for global frame $v \varphi$ \emph{global frame bounds}\index{global frame bound}. 

The advantage of the concept of fusion frames and fusion frame systems is, that it enables us to reconstruct signals in two (generally different) canonical ways. We can either perform frame reconstruction via the global frame $v \varphi$, i.e.
\begin{flalign}\label{centralized}
f = \sum_{i\in I} \sum_{j\in J_i} \langle f, v_i \varphi_{ij} \rangle S_{v\varphi}^{-1} v_i \varphi_{ij} ,
\end{flalign}
or we perform frame reconstruction of $\pi_{V_i} f$ via the frame $\varphi^{(i)}$ on a local level at first and then fuse the information together via fusion frame reconstruction, i.e.
\begin{flalign}\label{distributed}
f &= \sum_{i\in I} v_i ^2 S_V^{-1} \pi_{V_i} f \notag  \\
&= \sum_{i\in I} \sum_{j\in J_i} \langle f, v_i \varphi_{ij} \rangle S_V^{-1} S_{\varphi^{(i)}}^{-1} v_i \varphi_{ij} .
\end{flalign}
Following the terminology from \cite{cakuli08}, we call the reconstruction process (\ref{centralized}) \emph{centralized reconstruction}, and the reconstruction process (\ref{distributed}) \emph{distributed reconstruction}. Here, the question, how $S_{v\varphi}^{-1}$ and the operators $S_V^{-1} S_{\varphi^{(i)}}^{-1}$ are related, naturally arises, and will be answered in Proposition \ref{dualglobal2}.

\section{Bounded block diagonal operators}\label{sec3}

In order to give the relations between the (fusion) frames associated to a fusion frame system in terms of their frame-related operators, we reconsider Hilbert direct sums and a certain type of bounded operators between them.

\

Instead of considering Hilbert direct sums of closed subspaces of the same Hilbert space $\mathcal{H}$, we consider the slightly more general case, where $V_i$ are arbitrary Hilbert spaces, and set 
\begin{flalign}\label{HDS}
\big( \sum_{i\in I} \oplus V_i \big) _{\ell ^2} := \Big\lbrace (f_i )_{i\in I} : \, f_i \in V_i , \sum_{i\in I}\Vert f_i \Vert_{V_i} ^2 <\infty \Big\rbrace .
\end{flalign}
Obviously, setting $V_i = \mathbb{C}$ for all $i\in I$ yields the space $\ell^2 (I)$.

For $f = ( f_i )_{i\in I}$, $g=  (g_i )_{i\in I} \in ( \sum_{i\in I} \oplus V_i ) _{\ell ^2}$, the operation 
\begin{flalign}\label{innerproduct}
\langle f, g\rangle _{( \sum_{i\in I} \oplus V_i ) _{\ell ^2}} := \sum_{i\in I} \langle f_i ,g_i \rangle_{V_i}
\end{flalign}
is easily seen to be well-defined and inherits the defining properties of an inner product from the inner products $\langle \cdot , \cdot \rangle_{V_i}$, see also \cite{conw1}. Its induced norm is therefore given by 
\begin{equation}\label{norm2}
\Vert f \Vert_{( \sum_{i\in I} \oplus V_i ) _{\ell ^2}} = \Big(\sum_{i\in I} \Vert f_i \Vert_{V_i}^2 \Big)^{1/2}
\end{equation}
and adapting any standard completeness proof for $\ell^2 (I)$ yields that $( \sum_{i\in I} \oplus V_i ) _{\ell ^2}$ is also complete with respect to this norm. 

Of course, definition (\ref{HDS}) extends to the case where the spaces $V_i$ are only pre-Hilbert spaces, resulting in $( \sum_{i\in I} \oplus V_i ) _{\ell ^2}$ equipped with the inner product (\ref{innerproduct}) %(/norm (\ref{norm2}))
being only a pre-Hilbert %(/normed)
space. If and only if all $V_i$ are Hilbert spaces, the space $( \sum_{i\in I} \oplus V_i ) _{\ell ^2}$ is a Hilbert space, in which case we call it a \emph{Hilbert direct sum}. In particular, since subspaces of Hilbert spaces are closed if and only if they are complete, we obtain the following corollary.

\newtheorem{complete2}{Corollary}[section]

\begin{complete2}\label{complete2}
Consider the Hilbert direct sum $\big( \sum_{i\in I} \oplus V_i \big) _{\ell ^2}$ and for each $i\in I$, let $U_i$ be a subspace of $V_i$. Then $\big( \sum_{i\in I} \oplus U_i \big) _{\ell ^2}$ is a closed subspace of $\big( \sum_{i\in I} \oplus V_i \big) _{\ell ^2}$ if and only if $U_i$ is a closed subspace of $V_i$ for every $i\in I$. 
\end{complete2}

\newtheorem{complete3}[complete2]{Lemma}

The next result is also easy to show, but still worthwhile to be mentioned.

\begin{complete3}\label{complete3}
Consider the Hilbert direct sum $\big( \sum_{i\in I} \oplus V_i \big) _{\ell ^2}$ and the (not necessarily closed) subspaces $U_i$ of $V_i$ ($i\in I$) and $\big( \sum_{i\in I} \oplus U_i \big) _{\ell ^2}$ of $\big( \sum_{i\in I} \oplus V_i \big) _{\ell ^2}$. Then
\begin{flalign}
\bigg( \big( \sum_{i\in I} \oplus U_i \big) _{\ell ^2} \bigg) ^{\perp} = \big( \sum_{i\in I} \oplus U_i ^{\perp} \big) _{\ell ^2}. \notag
\end{flalign}
\end{complete3}

\begin{proof}
The "$\supseteq$"-part is trivial. %choose some $g = ( g_i )_{i\in I} \in \big( \sum_{i\in I} \oplus U_i ^{\perp} \big) _{\ell ^2}$ and observe that for any $f = ( f_i ) _{i\in I} \in \big( \sum_{i\in I} \oplus U_i \big) _{\ell ^2}$ we have $\langle f, g \rangle_{( \sum_{i\in I} \oplus V_i ) _{\ell ^2}} = \sum_{i\in I} \langle f_i , g_i \rangle _{V_i} = 0$, implying $g \in \left( ( \sum_{i\in I} \oplus U_i ) _{\ell ^2} \right) ^{\perp}$.%
To prove the "$\subseteq$"-part, we have to show that for any $h = ( h_i ) _{i\in I} \in \left( ( \sum_{i\in I} \oplus U_i ) _{\ell ^2} \right) ^{\perp}$ we have $h_i \in U_i ^{\perp}$ (for all $i\in I$). To this end, observe that since $\langle f, h \rangle_{( \sum_{i\in I} \oplus V_i ) _{\ell ^2}} = 0$  for all $f \in \big( \sum_{i\in I} \oplus U_i \big) _{\ell ^2}$, we particularly may choose $f = (..., 0, 0, f_i , 0, 0, ...) \in \big( \sum_{i\in I} \oplus U_i \big) _{\ell ^2}$ (where $f_i \in U_i$ is in the $i$-th entry) and see that in this case $0 = \langle f , h \rangle_{( \sum_{i\in I} \oplus V_i ) _{\ell ^2}} = \langle f_i , h_i \rangle _{V_i}$, which implies $h_i \in U_i ^{\perp}$. Since this holds for all $i\in I$, the proof is finished.
\end{proof}

Next we consider two Hilbert direct sums $\big( \sum_{i\in I} \oplus V_i \big) _{\ell ^2}$ and $\big( \sum_{i\in I} \oplus W_i \big) _{\ell ^2}$ indexed by the same index set. Let, for every $i\in I$, $\mathcal{O}_i \in \mathcal{B}(V_i, W_i)$ be given. We call the family $(\mathcal{O}_i)_{i\in I}$ \emph{completely bounded} \cite{paulsen_2003}, if there exists a constant $C>0$, such that $\Vert \mathcal{O}_i \Vert \leq C$ for all $i\in I$. It is easy to show the next lemma, see also \cite{conw1}.

\newtheorem{Op1}[complete2]{Lemma}

\begin{Op1}\label{Op1}
Consider the family  $( \mathcal{O}_i )_{i \in I}$ of operators $\mathcal{O}_i \in \mathcal{B}(V_i , W_i)$. Then
\begin{flalign}\label{+operator}
\bigoplus_{i\in I} \mathcal{O}_i  ( f_i )_{i\in I} := ( \mathcal{O}_i f_i )_{i\in I}
\end{flalign} defines a well-defined and bounded operator from $\left( \sum _{i\in I} \oplus V_i \right) _{\ell ^2}$ into $\left( \sum _{i\in I} \oplus W_i \right) _{\ell ^2}$ if and only if $(\mathcal{O}_i )_{i \in I}$ is completely bounded. In that case $\Vert \bigoplus_{i\in I} \mathcal{O}_i \Vert = \sup_{i\in I} \Vert \mathcal{O}_i \Vert$.
\end{Op1}

Bounded operators $\bigoplus_{i\in I} \mathcal{O}_i$, defined as in (\ref{+operator}), naturally appear in the definition of a \emph{dual fusion frame} \cite{kutpatphi17,Gav07,heimoanza14,hemo14,heimo18}. In accordance with these references we call such operators \emph{block diagonal}. We also note that in those references, block diagonal operators $\bigoplus_{i\in I} \mathcal{O}_i$ with the additional property, that each operator $\mathcal{O}_i$ is surjective, are called \emph{component preserving}. In the remainder of this section, a detailed investigation of bounded block diagonal operators between Hilbert direct sums is presented, whereas component preserving operators only implicitly appear on some occasions as a special case, see e.g. Proposition \ref{Op6}. 

We omit the obvious proof of the next result. For more details, see \cite{kohl21}.  

\newtheorem{Op2}[complete2]{Lemma}

\begin{Op2}\label{Op2}
Let $( \sum _{i\in I} \oplus U_i ) _{\ell ^2}$, $( \sum _{i\in I} \oplus V_i ) _{\ell ^2}$ and $( \sum _{i\in I} \oplus W_i ) _{\ell ^2}$ be Hilbert direct sums, let $\bigoplus_{i\in I} \mathcal{O}_i \in \mathcal{B} \big( ( \sum _{i\in I} \oplus V_i ) _{\ell ^2} , ( \sum _{i\in I} \oplus W_i ) _{\ell ^2} \big)$ and $\bigoplus_{i\in I} \mathcal{P}_i \in \mathcal{B} \big( ( \sum _{i\in I} \oplus U_i ) _{\ell ^2} , ( \sum _{i\in I} \oplus V_i ) _{\ell ^2} \big)$. Then the following hold.
\begin{itemize}
\item[(a)]
\begin{flalign}
\big( \bigoplus_{i\in I} \mathcal{O}_i \big) ^* = \bigoplus_{i\in I} \mathcal{O}^*_i  \notag
\end{flalign}
\item[(b)] If $( \sum _{i\in I} \oplus V_i ) _{\ell ^2} = ( \sum _{i\in I} \oplus W_i ) _{\ell ^2}$, then $\bigoplus_{i\in I} \mathcal{O}_i$ is self-adjoint if and only if $\mathcal{O}_i$ is self-adjoint for every $i\in I$. 
\item[(c)] 
\begin{flalign}
\big( \bigoplus_{i\in I} \mathcal{O}_i \big) \big( \bigoplus_{i\in I} \mathcal{P}_i \big) = \bigoplus_{i\in I}   (\mathcal{O}_i \mathcal{P}_i ) .\notag
\end{flalign}
\item[(d)] $\bigoplus_{i\in I} \mathcal{O}_i$ is bounded from below by $m$ if and only if $\mathcal{O}_i$ is bounded from below by $m$ for all $i\in I$.
\end{itemize}
\end{Op2}

%\begin{proof}
%(a) follows immediately from the definition of the inner product (\ref{innerproduct}), % $(\bigoplus_{i\in I} \mathcal{O}_i)^*$ and $\bigoplus_{i\in I} \mathcal{O}_i^* $ are well-defined and bounded. For $f = ( f_i ) _{i\in I} \in \big( \sum _{i\in I} \oplus V_i \big) _{\ell ^2}$ and $g = ( g_i )_{i\in I} \in \big( \sum _{i\in I} \oplus W_i \big) _{\ell ^2}$ we compute 
%\begin{align}
%\big\langle f, (\bigoplus_{i\in I} \mathcal{O}_i)^* g \big\rangle_{( \sum _{i\in I} \oplus V_i ) _{\ell ^2}} & = \big\langle (\bigoplus_{i\in I} \mathcal{O}_i) f ,g\big\rangle _{( \sum _{i\in I} \oplus W_i ) _{\ell ^2}} \notag \\
%& = \sum_{i\in I} \langle \mathcal{O}_i f_i , g_i \rangle _{W_i} \notag \\
%& = \sum_{i\in I} \langle f_i , \mathcal{O}^* _i g_i \rangle _{V_i}  \notag \\ 
%& = \big\langle f, \bigoplus_{i\in I} (\mathcal{O}_i^*) g \big\rangle _{( \sum _{i\in I} \oplus V_i ) _{\ell ^2}}.\notag
%\end{align}
%(b) follows from (a) and (c) from component-wise definition.
%\end{proof}

\newtheorem{Op3}[complete2]{Lemma}

Next, we consider the kernel and range of bounded block diagonal operators, where we notice a difference in the quality of the results.

\begin{Op3}\label{Op3}
Let $\bigoplus_{i\in I} \mathcal{O}_i \in \mathcal{B} \big( ( \sum _{i\in I} \oplus V_i ) _{\ell ^2} , ( \sum _{i\in I} \oplus W_i ) _{\ell ^2} \big)$. Then
\begin{itemize}
\item[(a)] 
\begin{equation}\label{kernel}
\mathcal{N}\big( \bigoplus_{i\in I} \mathcal{O}_i \big) = \big( \sum _{i\in I} \oplus \mathcal{N}(\mathcal{O}_i ) \big) _{\ell ^2} \notag
\end{equation}
\item[(b)] 
\begin{equation}\label{range}
\mathcal{R}\big( \bigoplus_{i\in I} \mathcal{O}_i \big) \subseteq \big( \sum _{i\in I} \oplus \mathcal{R}(\mathcal{O}_i) \big) _{\ell ^2} \notag
\end{equation}
\end{itemize}
\end{Op3}

\begin{proof}
By component-wise definition (\ref{+operator}) we have
\begin{flalign}
\mathcal{N} \big( \bigoplus_{i\in I} \mathcal{O} _i \big) & = \Big\lbrace (f_i )_{i\in I} \in \big( \sum _{i\in I} \oplus V_i \big) _{\ell ^2}: \,  \mathcal{O}_i f_i = 0 \,  (\forall i\in I)\Big\rbrace \notag \\
& = \Big\lbrace ( f_i )_{i\in I} : f_i \in \mathcal{N}(\mathcal{O}_i) \, (\forall i\in I), \, \sum_{i\in I} \Vert f_i \Vert_{V_i} ^2 < \infty \Big\rbrace \notag \\
& = \big( \sum _{i\in I} \oplus \mathcal{N}(O_i) \big) _{\ell ^2}, \notag
\end{flalign}
as well as 
\begin{flalign}\label{subset}
\mathcal{R}\big( \bigoplus_{i\in I} \mathcal{O}_i \big) &= \Big\lbrace (g_i)_{i\in I} \in \big( \sum _{i\in I} \oplus W_i \big) _{\ell ^2}: \exists (f_i)_{i\in I}\in \big( \sum _{i\in I} \oplus V_i \big) _{\ell ^2}:\mathcal{O}_i f_i = g_i \, (\forall i\in I) \Big\rbrace \notag \\
&\subseteq \Big\lbrace (g_i)_{i\in I} \in \big( \sum _{i\in I} \oplus W_i \big) _{\ell ^2}: \exists (f_i)_{i\in I}\in (V_i)_{i\in I} :\mathcal{O}_i f_i = g_i \, (\forall i\in I) \Big\rbrace \\
&= \Big\lbrace (g_i)_{i\in I} \in \big( \sum _{i\in I} \oplus W_i \big) _{\ell ^2}: \, g_i \in \mathcal{R}(\mathcal{O}_i) \, (\forall i\in I) \Big\rbrace \notag \\
&= \big( \sum _{i\in I} \oplus \mathcal{R}(\mathcal{O}_i) \big) _{\ell ^2}. \notag 
\end{flalign}
\end{proof}

In general, we cannot achieve equality in (\ref{subset}) without further assumptions, except in the trivial case $\vert I\vert <\infty$ (compare to Example \ref{Ex}). However, if $\bigoplus_{i\in I} \mathcal{O}_i$ has closed range, then equality is always achieved, as the next result shows.

\newtheorem{Op4}[complete2]{Proposition}

\begin{Op4}\label{Op4}
Let $\bigoplus_{i\in I} \mathcal{O}_i \in \mathcal{B} \big( ( \sum _{i\in I} \oplus V_i ) _{\ell ^2} , ( \sum _{i\in I} \oplus W_i ) _{\ell ^2} \big)$ and assume that $\bigoplus_{i\in I} \mathcal{O}_i$ has closed range. Then
\begin{itemize}
\item[(a)] \begin{flalign}
    \mathcal{R}\big( \bigoplus_{i\in I} \mathcal{O}_i \big) = \big( \sum _{i\in I} \oplus \mathcal{R}(\mathcal{O}_i) \big) _{\ell ^2} .\notag
\end{flalign}
\item[(b)] $\mathcal{O}_i$ has closed range for every $i\in I$.
\end{itemize}
\end{Op4}

\begin{proof}
\noindent (a) We have
\begin{flalign}
    \overline{\big( \sum_{i\in I} \oplus \mathcal{R}(\mathcal{O}_i ) \big) _{\ell ^2}} &= \left( \left( \big( \sum_{i\in I} \oplus \mathcal{R}(\mathcal{O}_i ) \big) _{\ell ^2} \right)^{\perp} \right) ^{\perp}  \notag \\
    &= \left( \big( \sum_{i\in I} \oplus \mathcal{R}(\mathcal{O}_i ) ^{\perp} \big) _{\ell ^2} \right)^{\perp} && \llap{(by Lemma \ref{complete3})} \notag \\
    &= \left( \big( \sum_{i\in I} \oplus \mathcal{N}(\mathcal{O}_i ^* ) \big) _{\ell ^2} \right)^{\perp} \notag \\
    &= \mathcal{N}\Big( \bigoplus_{i\in I} \mathcal{O}_i ^*  \Big)^{\perp} && \llap{(by Lemma \ref{Op3} (a))} \notag \\
    &= \mathcal{N}\bigg( \Big( \bigoplus_{i\in I} \mathcal{O}_i \Big) ^* \bigg)^{\perp} && \llap{(by Lemma \ref{Op2} (a))} \notag \\
    &= \left( \mathcal{R}\Big( \bigoplus_{i\in I} \mathcal{O}_i \Big) ^{\perp} \right)^{\perp} \notag \\
    &= \overline{\mathcal{R}\big( \bigoplus_{i\in I} \mathcal{O}_i \big)}  \notag \\
     &= \mathcal{R}\big( \bigoplus_{i\in I} \mathcal{O}_i \big) \notag \\
     &\subseteq \big( \sum_{i\in I} \oplus \mathcal{R}(\mathcal{O}_i ) \big) _{\ell ^2} && \llap{(by Lemma \ref{Op3} (b))} , \notag 
\end{flalign}
which implies (a) and also shows that $\big( \sum_{i\in I} \oplus \mathcal{R}(\mathcal{O}_i ) \big) _{\ell ^2}$ is a closed subspace of $\big( \sum_{i\in I} \oplus W_i \big) _{\ell ^2}$. By Corollary \ref{complete2}, this implies (b).
\end{proof}

Proposition \ref{Op4} shows, that if $\bigoplus_{i\in I} \mathcal{O}_i$ has closed range, so do the operators $\mathcal{O}_i$ for all $i\in I$. Therefore their corresponding pseudo inverses $\big(\bigoplus_{i\in I} \mathcal{O}_i \big) ^{\dagger}$ and $\mathcal{O}_i ^{\dagger}$ ($i\in I$) are well-defined. In the next result, we give a relation between them.
%$\big(\bigoplus_{i\in I} \mathcal{O}_i \big) ^{\dagger}$ and the operators $\mathcal{O}_i ^{\dagger}$.

\newtheorem{Op5}[complete2]{Proposition}

\begin{Op5}\label{Op5}
Let $\bigoplus_{i\in I} \mathcal{O}_i \in \mathcal{B} \Big( \big( \sum _{i\in I} \oplus V_i \big) _{\ell ^2} , \big( \sum _{i\in I} \oplus W_i \big) _{\ell ^2} \Big)$ and assume that $\bigoplus_{i\in I} \mathcal{O}_i$ has closed range. If, in addition, the family $( \mathcal{O}_i ^{\dagger} )_{i\in I}$ is completely bounded and if $\bigoplus_{i\in I} \mathcal{O}_i ^{\dagger}$ has closed range, then 
\begin{flalign}
\Big( \bigoplus_{i\in I} \mathcal{O} _i \Big)^{\dagger} = \bigoplus_{i\in I} \mathcal{O}_i ^{\dagger} .\notag
\end{flalign}
\end{Op5}

\begin{proof}
By Lemma \ref{Op1}, the operators 
$$\Big( \bigoplus_{i\in I} \mathcal{O} _i \Big)^{\dagger} : \big( \sum _{i\in I} \oplus W_i \big) _{\ell ^2} \longrightarrow \big( \sum _{i\in I} \oplus V_i \big) _{\ell ^2}$$
and
$$\bigoplus_{i\in I} \mathcal{O} _i ^{\dagger} : \big( \sum _{i\in I} \oplus W_i \big) _{\ell ^2} \longrightarrow \big( \sum _{i\in I} \oplus V_i \big) _{\ell ^2}$$ 
both are well-defined and bounded. Moreover, it holds 
$$\big( \bigoplus_{i\in I} \mathcal{O} _i \big) \big( \bigoplus_{i\in I} \mathcal{O} _i ^{\dagger} \big) \big( \bigoplus_{i\in I} \mathcal{O} _i \big) = \bigoplus_{i\in I} (\mathcal{O} _i \mathcal{O} _i ^{\dagger} \mathcal{O} _i ) = \bigoplus_{i\in I} \mathcal{O} _i ,$$ 
as well as
\begin{flalign}
    \mathcal{N} \big( \bigoplus_{i\in I} \mathcal{O} _i ^{\dagger} \big) &= \big( \sum _{i\in I} \oplus \mathcal{N}(\mathcal{O} _i ^{\dagger}) \big) _{\ell ^2} &&\llap{(by Lemma \ref{Op3} (a))} \notag \\
    &= \big( \sum _{i\in I} \oplus (\mathcal{R}(\mathcal{O} _i ) ^{\perp} \big) _{\ell ^2} &&\llap{(by (\ref{pseudoinverse2}))} \notag \\
    &= \big( \sum _{i\in I} \oplus \mathcal{R}(\mathcal{O} _i ) \big) _{\ell ^2} ^{\perp} &&\llap{(by Lemma \ref{complete3})} \notag \\
    &= \mathcal{R} \big( \bigoplus_{i\in I}\mathcal{O} _i \big)^{\perp} &&\llap{(by Proposition \ref{Op4} (a))} . \notag
\end{flalign}
Finally, the assumption, that $\bigoplus_{i \in I} \mathcal{O}_i ^{\dagger}$ has closed range, guarantees via Proposition \ref{Op4} (a) that 
$$\mathcal{R} \big( \bigoplus_{i\in I} \mathcal{O} _i ^{\dagger} \big) = \big( \sum _{i\in I} \oplus \mathcal{R}(\mathcal{O} _i ^{\dagger}) \big) _{\ell ^2} ,$$ hence
\begin{flalign}
\mathcal{R} \big( \bigoplus_{i\in I} \mathcal{O} _i ^{\dagger} \big) = \big( \sum _{i\in I} \oplus \mathcal{R}(\mathcal{O} _i ^{\dagger}) \big) _{\ell ^2} &= \big( \sum _{i\in I} \oplus (\mathcal{N}(\mathcal{O} _i ) ^{\perp} \big) _{\ell ^2} &&\llap{(by (\ref{pseudoinverse2}))} \notag \\
&= \big( \sum _{i\in I} \oplus \mathcal{N}(\mathcal{O} _i ) \big) _{\ell ^2} ^{\perp} &&\llap{(by Lemma \ref{complete3})} \notag \\
&= \mathcal{N} \big( \bigoplus_{i\in I}\mathcal{O} _i \big)^{\perp} &&\llap{(by Lemma \ref{Op3} (a))} . \notag
\end{flalign}
\end{proof}

Next we discuss the properties injectivity, surjectivity and invertibility of block diagonal operators between Hilbert direct sums.

\newtheorem{Op6}[complete2]{Proposition}

\begin{Op6}\label{Op6}
Let $\bigoplus_{i\in I} \mathcal{O}_i \in \mathcal{B} \Big( \big( \sum _{i\in I} \oplus V_i \big) _{\ell ^2} , \big( \sum _{i\in I} \oplus W_i \big) _{\ell ^2} \Big)$. Then
\begin{itemize}
\item[(a)] 
$\bigoplus_{i\in I} \mathcal{O}_i$ is injective if and only if $\mathcal{O}_i$ is injective for all $i\in I$. 
\item[(b)] If $\bigoplus_{i\in I} \mathcal{O}_i$ is surjective, then $\mathcal{O}_i$ is surjective  for all $i\in I$. 
\item[(c)] If $\mathcal{O}_i$ is surjective for all $i\in I$ and if the family $(\mathcal{O}_i ^{\dagger} )_{i\in I}$ is completely bounded, then $\bigoplus_{i\in I} \mathcal{O}_i$ is surjective.
\item[(d)] If $\bigoplus_{i\in I} \mathcal{O}_i$ is invertible, then $\mathcal{O}_i$ is invertible for all $i\in I$ and 
\begin{equation}\label{+invertible}
\Big( \bigoplus_{i\in I} \mathcal{O}_i \Big) ^{-1} = \bigoplus_{i\in I} \mathcal{O}_i ^{-1} .
\end{equation}
\item[(e)] If $\mathcal{O}_i$ is invertible for all $i\in I$ and if $(\mathcal{O}_i ^{-1} )_{i\in I}$ is completely bounded, then also $\bigoplus_{i\in I} \mathcal{O}_i $ is invertible and (\ref{+invertible}) holds. 
\end{itemize}
\end{Op6}

\begin{proof}
\noindent (a) follows immediately from Lemma \ref{Op3} (a).

\noindent (b) Assume that $\bigoplus_{i\in I} \mathcal{O}_i$ is surjective and choose $i\in I$ arbitrary. Then for any $g_i \in W_i$, $(...,0, 0, g_i, 0, 0, ...) \in \big( \sum _{i\in I} \oplus W_i \big) _{\ell ^2}$ and by assumption there exists some $(f_i )_{i\in I} \in \big( \sum _{i\in I} \oplus V_i \big) _{\ell ^2}$ such that $\bigoplus_{i\in I} \mathcal{O}_i  ( f_i)_{i\in I}  = (...,0, 0, g_i, 0, 0, ...)$. This implies $\mathcal{O}_i f_i = g_i$, hence $\mathcal{O}_i$ is surjective.

\noindent (c) We have $\bigoplus_{i\in I} \mathcal{O}_i ^{\dagger} \in \mathcal{B}\big( (\sum _{i\in I} \oplus W_i ) _{\ell ^2}, ( \sum _{i\in I} \oplus V_i ) _{\ell ^2} \big)$ by Lemma \ref{Op1}. Moreover, by properties of pseudo-inverses, $\mathcal{O}_i ^{\dagger}$ is a right-inverse of $\mathcal{O}_i$ on $\mathcal{R}(\mathcal{O}_i) = W_i$ ($i\in I$). Therefore, by component-wise definition and Lemma \ref{Op2} (c), $\bigoplus_{i\in I} \mathcal{O}_i ^{\dagger}$ is a right-inverse of $\bigoplus_{i\in I} \mathcal{O}_i$ on $\big( \sum _{i\in I} \bigoplus W_i \big) _{\ell ^2}$. In particular, for any $(g_i)_{i\in I} \in \big( \sum _{i\in I} \bigoplus W_i \big) _{\ell ^2}$, we can find $(f_i)_{i\in I} := (\mathcal{O}_i ^{\dagger} g_i )_{i\in I} \in \big( \sum _{i\in I} \bigoplus V_i \big) _{\ell ^2}$, such that $\bigoplus_{i\in I} \mathcal{O}_i  (f_i)_{i\in I} = (g_i)_{i\in I}$, i.e. $ \bigoplus_{i\in I} \mathcal{O}_i$ is surjective.

\noindent (d) By (a) and (b), $\mathcal{O}_i$ is bijective for every $i\in I$. Moreover, since $\bigoplus_{i\in I} \mathcal{O}_i$ is invertible, it is bounded from below by some constant $m>0$, which implies that $\mathcal{O}_i$ is bounded from below by $m$ for all $i\in I$. Consequently, $(\mathcal{O}_i^{-1})_{i\in I}$ is completely bounded by $m^{-1}$, hence $\bigoplus_{i\in I} \mathcal{O}_i^{-1}$ is well-defined and bounded and obviously a left- and right-inverse of $\bigoplus_{i\in I} \mathcal{O}_i$. 

\noindent (e) Since $(\mathcal{O}_i ^{-1} )_{i\in I} = (\mathcal{O}_i ^{\dagger})_{i\in I}$ is completely bounded, we may apply (a) and (c) to see that $\bigoplus_{i\in I} \mathcal{O}_i$ is bijective and thus invertible. Moreover $\bigoplus_{i\in I} \mathcal{O}_i ^{-1}$ is well-defined and bounded and (\ref{+invertible}) clearly holds.
\end{proof}

\newtheorem{Op7}[complete2]{Corollary}

%\begin{Op7}
%Let $\bigoplus_{i\in I} \mathcal{O}_i \in \mathcal{B} \big( ( \sum _{i\in I} \oplus V_i ) _{\ell ^2} , ( \sum _{i\in I} \oplus W_i ) _{\ell ^2} \big)$. If $\mathcal{O}_i$ is surjective for all $i\in I$ and if  \end{Op7}

\newtheorem{Ex}[complete2]{Example}
\begin{Ex}\label{Ex}
Let $I = \mathbb{N}$ and let $V_i = W_i \neq \lbrace 0 \rbrace$ for all $i \in I$. For every $i \in I$, we define 
$$\mathcal{O}_i : V_i \longrightarrow V_i , \, \, \, \mathcal{O}_i := \frac{1}{\sqrt{i}} \mathcal{I}_{V_i}.$$
For each $i$ we have $\Vert \mathcal{O}_i \Vert \leq 1$, i.e. the family $(\mathcal{O}_i )_{i\in I}$ is completely bounded by $1$. Thus, by Lemma \ref{Op1}, $ \bigoplus_{i\in I} \mathcal{O}_i : \big( \sum _{i\in I} \oplus V_i \big) _{\ell ^2} \longrightarrow \big( \sum _{i\in I} \oplus V_i \big) _{\ell ^2}$ is a well-defined and bounded operator.
\begin{itemize}
\item[(a)] Let us reconsider Lemma \ref{Op3} (b). We give an example of an element $g\in \big( \sum _{i\in I} \oplus V_i \big) _{\ell ^2}$, which is contained in $\big( \sum _{i\in I} \oplus \mathcal{R}(\mathcal{O}_i) \big)_{\ell ^2}$ but not in $\mathcal{R}\big( \bigoplus_{i\in I} \mathcal{O}_i \big)$: For every $i\in I$, choose some normalized vector $h_i \in V_i$ and set $f_i := \frac{h_i}{\sqrt{i}} \in V_i$ and $g_i := \frac{h_i}{i} \in V_i$. Clearly we have $\mathcal{O}_i f_i = g_i$ for each $i\in I$. Observe that $g = ( g_i )_{i\in I} \in \left( \sum _{i\in I} \bigoplus V_i \right) _{\ell ^2}$, since $\Vert g \Vert ^2 _{( \sum _{i\in I} \oplus V_i ) _{\ell ^2}} = \sum_{i\in \mathbb{N}} \frac{1}{i^2} = \frac{\pi ^2}{6}$. In particular, $g \in \big( \sum _{i\in I} \oplus \mathcal{R}(\mathcal{O}_i) \big) _{\ell ^2}$. On the other hand, $f = ( f_i )_{i\in I} \not\in ( \sum _{i\in I} \bigoplus V_i ) _{\ell ^2}$, since $\Vert f \Vert ^2 _{( \sum _{i\in I} \oplus V_i ) _{\ell ^2}} = \sum_{i\in \mathbb{N}} \frac{1}{i} = \infty$.  However, $g \not\in \mathcal{R}\big( \bigoplus_{i\in I} \mathcal{O}_i \big)$, since, by component-wise definition, $f$ is the only possible candidate to be mapped  onto $g$ by $\bigoplus_{i\in I} \mathcal{O}_i$, while $f \not\in \big( \sum _{i\in I} \oplus V_i \big) _{\ell ^2}$.
\item[(b)] We also reconsider Proposition \ref{Op6} (c). At first glance, one might guess that if $( \mathcal{O}_i )_{i\in I}$ is a completely bounded family of surjective operators, then $ \bigoplus_{i\in I} \mathcal{O}_i$ has to be surjective as well. However, the above example demonstrates the importance of the (in this case missing) condition that $( \mathcal{O}_i ^{\dagger} ) _{i\in I}$ is completely bounded: The operators $O_i = \frac{1}{\sqrt{i}} \mathcal{I}_{V_i}$ are not only surjective and completely bounded, but also injective. However, $\bigoplus_{i\in I} \mathcal{O}_i$ is not surjective as shown above. Observe that $\Vert\mathcal{O}_i  ^{\dagger} \Vert = \Vert \mathcal{O}_i ^{-1} \Vert = \sqrt{i}$, i.e. the family $( \mathcal{O}_i ^{\dagger} ) _{i\in I}$ fails to be completely bounded. 
\end{itemize}
\end{Ex}

\section{Fusion frame systems and related operators}\label{sec4}

We are now prepared to prove relations between the (fusion) frames associated to a fusion
frame system in terms of their associated synthesis, analysis and frame operators.

\subsection{Operator identities for fusion frame systems}\label{subsec4-2}

Let $(V_i , v_i , \varphi ^{(i)})_{i \in I}$ be a fusion frame system with corresponding global frame $v\varphi = (v_i \varphi_{ij})_{i\in I, j\in J_i}$. The representation space of $v\varphi$ is the space
$$\ell ^2 \big( \biguplus _{i\in I} J_i \big) := \Big\lbrace (c_{ij})_{i\in I, j\in J_i}: c_{ij}\in \mathbb{C}, \sum_{i\in I} \sum_{j\in J_i} \vert c_{ij}\vert^2 < \infty \Big\rbrace ,$$
which clearly is a Hilbert space with respect to the inner product $\langle (c_{ij})_{i\in I, j\in J_i}, (d_{ij})_{i\in I, j\in J_i} \rangle = \sum_{i\in I} \sum_{j\in J_i} c_{ij}\overline{d_{ij}}$. We observe that norm of $(c_{ij})_{i\in I, j\in J_i} \in \ell ^2 ( \biguplus _{i\in I} J_i )$, which is given by  
$$\Vert (c_{ij})_{i\in I, j\in J_i}\Vert = \big( \sum_{i\in I} \sum_{j\in J_i} \vert c_{ij}\vert^2 \big)^{1/2},$$ 
equals the norm of $(c_i)_{i\in I} \in \big( \sum _{i\in I}\oplus \ell ^2 (J_i) \big) _{\ell ^2}$, where $c_i = (c_{ij})_{j\in J_i} \in \ell^2(J_i)$. In particular, we obtain the following result, see \cite{kohl21} for more details.

\newtheorem{lemmaisometry}{Proposition}[section]

\begin{lemmaisometry}\label{lemmaisometry}
The Hilbert spaces $\ell ^2 \big( \biguplus _{i\in I} J_i \big)$ and $\big( \sum _{i\in I}\oplus \ell ^2 (J_i) \big) _{\ell ^2}$ are isometrically isomorphic. 
\end{lemmaisometry}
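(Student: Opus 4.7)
The plan is to exhibit the obvious ``regrouping'' map and show it is an isometric isomorphism. Define
\[
T : \ell^2\bigl( \biguplus_{i\in I} J_i \bigr) \longrightarrow \bigl( \sum_{i\in I}\oplus \ell^2(J_i) \bigr)_{\ell^2}, \qquad T\bigl( (c_{ij})_{i\in I, j\in J_i} \bigr) := \bigl( (c_{ij})_{j\in J_i} \bigr)_{i\in I}.
\]
In other words, $T$ takes a doubly-indexed sequence and views it as a sequence (indexed by $i$) whose $i$-th entry is the section $c_i := (c_{ij})_{j\in J_i}$.

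The key computation is norm preservation, which reduces to the elementary fact that a sum of non-negative terms over a disjoint union equals the iterated sum. First I would observe that for any $(c_{ij})_{i\in I, j\in J_i} \in \ell^2(\biguplus_{i\in I}J_i)$, each partial sum $\sum_{j\in J_i}|c_{ij}|^2$ is bounded by $\sum_{i\in I}\sum_{j\in J_i}|c_{ij}|^2 < \infty$, so every section $c_i$ lies in $\ell^2(J_i)$. Moreover,
\[
\sum_{i\in I} \Vert c_i \Vert_{\ell^2(J_i)}^2 \;=\; \sum_{i\in I} \sum_{j\in J_i} |c_{ij}|^2 \;=\; \sum_{(i,j)\in \biguplus_{i\in I}J_i} |c_{ij}|^2 \;=\; \Vert (c_{ij})_{i,j}\Vert^2,
\]
which shows simultaneously that $T(c)$ lies in the Hilbert direct sum and that $\Vert T(c)\Vert = \Vert c\Vert$. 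Hence $T$ is a well-defined linear isometry (linearity is immediate from the componentwise definition).

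For surjectivity, I would simply write down the inverse: given $(c_i)_{i\in I} \in (\sum_{i\in I}\oplus \ell^2(J_i))_{\ell^2}$ with $c_i = (c_{ij})_{j\in J_i}$, set $S((c_i)_{i\in I}) := (c_{ij})_{i\in I, j\in J_i}$. The same rearrangement of non-negative terms shows $S((c_i)_{i\in I}) \in \ell^2(\biguplus_{i\in I}J_i)$ with matching norm, and clearly $TS = \mathcal{I}$ and $ST = \mathcal{I}$. Thus $T$ is an isometric isomorphism.

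The only substantive point is the rearrangement identity above, which is standard for counting measures (Tonelli's theorem), so I do not expect any real obstacle; the proof is essentially bookkeeping. If one prefers to avoid explicit measure-theoretic language, one can reach the same conclusion by comparing finite sub-sums and taking suprema over finite subsets $F \subseteq \biguplus_{i\in I}J_i$ on one side and finite subsets of $I$ together with finite subsets of each $J_i$ on the other.
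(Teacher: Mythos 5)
Your proposal is correct and matches the paper's own argument: the authors likewise identify $(c_{ij})_{i\in I, j\in J_i}$ with the sequence of sections $(c_i)_{i\in I}$, $c_i=(c_{ij})_{j\in J_i}$, and note that the two norms coincide via the rearrangement of the non-negative double sum. Your additional details (well-definedness of each section, the explicit inverse) are exactly the bookkeeping the paper delegates to the reference, so nothing is missing.
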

%
%\begin{proof}
%Let $c = ( c_i )_{i\in I} \in (\sum _{i\in I} \oplus \ell ^2 (J_i))_{\ell ^2}$ with $c_i = ( c_{ij} )_{j\in J_i} \in \ell ^2 (J_i)$. Define 
%$$U: \big(\sum _{i\in I} \oplus \ell ^2 (J_i) \big)_{\ell ^2} \longrightarrow \ell ^2 \big( \biguplus _{i\in I} J_i \big) , $$
%$$Uc = U (( c_{ij} )_{j\in J_i} )_{i\in I} := ( c_{ij} )_{i\in I, j\in J_i}.$$
%Then 
%\begin{equation}\label{ell2}
%\Vert c \Vert ^2 _{(\sum _{i\in I} \oplus \ell ^2 (J_i))_{\ell ^2}} = \sum_{i\in I} \Vert c_i \Vert ^2 _{\ell ^2 (J_i)} = \sum_{i\in I} \sum_{j \in J_i} \vert c_{ij} \vert ^2 = \Vert Uc \Vert ^2 _{\ell ^2 (\biguplus _{i\in I} J_i)}
%\end{equation}
%shows that $U$ is a well-defined isometry. To show the surjectivity of $U$, choose some arbitrary $( c_{ij} )_{i\in I, j\in J_i} \in \ell ^2 (\biguplus _{i\in I} J_i)$ and observe that for any $i\in I$, $c_i := (c_{ij} )_{j\in J_i} \in \ell ^2 (J_i)$, since by (\ref{ell2}) we have 
%$$\Vert c_i \Vert _{\ell ^2 (J_i)} ^2 \leq \Vert ( c_{ij} )_{i\in I, j\in J_i} \Vert ^2 _{\ell ^2 (\biguplus _{i\in I} J_i)} < \infty. $$
%Again by (\ref{ell2}), we see that $(c_i )_{i\in I} \in (\sum _{i\in I} \oplus \ell ^2 (J_i))_{\ell ^2}$ and by definition of $U$ we have $U ( c_i )_{i\in I} =  (c_{ij} )_{i\in I, j\in J_i}$.
%\end{proof}

Recall, that the definition of a fusion frame system implies that the family $(D_{\varphi^{(i)}})_{i\in I}$ is completely bounded by $\sqrt{B}$. This observation begs for an application of our results from Section \ref{sec3}. Indeed, the link between the global frame $v \varphi = (v_i \varphi_{ij} )_{i\in I, j\in J_i}$, the fusion frame $V=(V_i , v_i) _{i\in I}$ and the local frames $\varphi ^{(i)} = ( \varphi _{ij} )_{j\in J_i}$ corresponding to the fusion frame system $(V_i , v_i , \varphi^{(i)})_{i\in I}$ is mirrored by the operator identities below. We note that identities (\ref{synthesis}) and (\ref{analysis}) also appear in \cite{heimo18}, their finite-dimensional versions can be found in \cite{hemo14}. However, for the convenience of the reader, we state a full proof.    

\newtheorem{great}[lemmaisometry]{Proposition}

\begin{great} \label{great}
Let $(V_i , v_i , \varphi ^{(i)} )_{i \in I}$ be a fusion frame system and $v\varphi$ be the corresponding global frame. Then the following hold:
\begin{flalign}
D_{v\varphi} &= D_V  \bigoplus _{i\in I} D_{\varphi ^{(i)}} \label{synthesis} \\
C_{v\varphi} &= \big( \bigoplus _{i\in I} C_{\varphi ^{(i)}} \big) C_V \label{analysis} \\
S_{v\varphi} &= D_V \big( \bigoplus _{i\in I} S_{\varphi ^{(i)}} \big) C_V .\label{frame}
\end{flalign}
\end{great}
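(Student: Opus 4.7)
The plan is to prove the three identities in sequence, deriving (\ref{analysis}) and (\ref{frame}) from (\ref{synthesis}) essentially for free. Throughout, I will use the isometric isomorphism of Proposition \ref{lemmaisometry} to silently identify $\ell^2\big( \biguplus_{i\in I} J_i \big)$ with $\big( \sum_{i\in I} \oplus \ell^2(J_i) \big)_{\ell^2}$, so that a sequence $c = (c_{ij})_{i\in I, j\in J_i}$ is regarded as $(c_i)_{i\in I}$ with $c_i = (c_{ij})_{j\in J_i} \in \ell^2(J_i)$. I also note that by the definition of a fusion frame system the family $(D_{\varphi^{(i)}})_{i\in I}$ is completely bounded by $\sqrt{B}$, so by Lemma \ref{Op1} the operator $\bigoplus_{i\in I} D_{\varphi^{(i)}}$ is well-defined and bounded from $\big(\sum_{i\in I} \oplus \ell^2(J_i)\big)_{\ell^2}$ to $\big(\sum_{i\in I} \oplus V_i\big)_{\ell^2}$; here I use that each $D_{\varphi^{(i)}}$ maps into $V_i$ since $\varphi^{(i)}$ is a frame for $V_i$.

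For (\ref{synthesis}), I would simply unravel the composition on an arbitrary $c = (c_{ij}) \in \ell^2\big( \biguplus_{i\in I} J_i \big)$. Applying $\bigoplus_{i\in I} D_{\varphi^{(i)}}$ yields the element $\big( \sum_{j\in J_i} c_{ij}\varphi_{ij} \big)_{i\in I}$ of $\big(\sum_{i\in I} \oplus V_i\big)_{\ell^2}$, and then $D_V$ multiplies the $i$-th component by $v_i$ and sums over $i$, giving $\sum_{i\in I} \sum_{j\in J_i} c_{ij} v_i \varphi_{ij} = D_{v\varphi} c$. The unconditional convergence of this double sum (which is needed to legitimately rearrange) is guaranteed by the fact that $v\varphi$ is a Bessel sequence, and all domain issues vanish because both sides are bounded everywhere-defined operators by Theorem \ref{framechar} together with Theorem \ref{fusionframecar}.

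For (\ref{analysis}), the cleanest route is to take adjoints in (\ref{synthesis}). Since $v\varphi$ is a Bessel sequence we have $C_{v\varphi} = D_{v\varphi}^*$; similarly $C_V = D_V^*$, and by Lemma \ref{Op2}(a) together with $D_{\varphi^{(i)}}^* = C_{\varphi^{(i)}}$ we obtain $\big(\bigoplus_{i\in I} D_{\varphi^{(i)}}\big)^* = \bigoplus_{i\in I} C_{\varphi^{(i)}}$. Taking the adjoint of both sides of (\ref{synthesis}) and reversing the order of composition then yields (\ref{analysis}). Finally, (\ref{frame}) follows by combining the two previous identities: using Lemma \ref{Op2}(c) we get
\begin{flalign}
S_{v\varphi} = D_{v\varphi} C_{v\varphi} = D_V \big( \bigoplus_{i\in I} D_{\varphi^{(i)}} \big) \big( \bigoplus_{i\in I} C_{\varphi^{(i)}} \big) C_V = D_V \big( \bigoplus_{i\in I} D_{\varphi^{(i)}} C_{\varphi^{(i)}} \big) C_V = D_V \big( \bigoplus_{i\in I} S_{\varphi^{(i)}} \big) C_V. \notag
\end{flalign}

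I do not foresee a genuinely hard step: the main thing to be careful about is keeping track of the isomorphism in Proposition \ref{lemmaisometry} so that the ``same'' sequence is read correctly as an element of both $\ell^2\big( \biguplus_{i\in I} J_i \big)$ and $\big(\sum_{i\in I} \oplus \ell^2(J_i)\big)_{\ell^2}$, and confirming that $\bigoplus_{i\in I} D_{\varphi^{(i)}}$ genuinely takes values in $\big(\sum_{i\in I} \oplus V_i\big)_{\ell^2}$ (rather than merely in $\big(\sum_{i\in I} \oplus \mathcal{H}\big)_{\ell^2}$) so that $D_V$ can be applied to it. Both points are routine given the definitions and the boundedness assertions already established.
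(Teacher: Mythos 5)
Your proposal is correct and follows essentially the same route as the paper's proof: verify \eqref{synthesis} by unravelling the composition on an arbitrary $c \in \ell^2\big(\biguplus_{i\in I} J_i\big) \cong \big(\sum_{i\in I}\oplus\ell^2(J_i)\big)_{\ell^2}$ using the complete boundedness of $(D_{\varphi^{(i)}})_{i\in I}$ and Lemma \ref{Op1}, then obtain \eqref{analysis} by taking adjoints via Lemma \ref{Op2}, and \eqref{frame} by composing the two. The additional remarks on unconditional convergence and on $\bigoplus_{i\in I} D_{\varphi^{(i)}}$ landing in $\big(\sum_{i\in I}\oplus V_i\big)_{\ell^2}$ are sound and only make explicit what the paper leaves implicit.
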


\begin{proof}
If $ c = (c_{ij})_{i \in I, j \in J_i} \in \ell ^2 (\biguplus_{i\in I} J_i)  \cong \left( \sum_{i\in I}\oplus \ell ^2 (J_i) \right) _{l^2}$, then $c_i = (c_{ij} )_{j \in J_i} \in \ell ^2 (J_i)$ for every $i\in I$. Since the family $(D_{\varphi ^{(i)}})_{i\in I}$ is completely bounded by $\sqrt{B}$, Lemma \ref{Op1} guarantees that $\bigoplus_{i\in I} D_{\varphi ^{(i)}}$ is well-defined and bounded, i.e. that
$$(D_{\varphi ^{(i)}} c_i ) _{ i\in I}\in \big(\sum_{i\in I}\oplus V_i \big)_{\ell^2} .$$
Therefore we may write
$$D_{v\varphi} c = \sum_{i\in I} \sum_{j\in J_i} c_{ij} v_i \varphi_{ij} = \sum_{i\in I} v_i D_{\varphi ^{(i)}} c_i = D_V \bigoplus _{i\in I} D_{\varphi ^{(i)}} c ,$$
which shows \eqref{synthesis}. Applying Lemma \ref{Op2} yields
\begin{flalign}
C_{v\varphi} = D^*_{v\varphi} &= \Big( D_V \bigoplus _{i\in I} D_{\varphi ^{(i)}} \Big) ^* \notag \\
&= \Big( \bigoplus _{i\in I} D_{\varphi ^{(i)}} \Big) ^* D_V^* \notag \\ 
&= \Big( \bigoplus _{i\in I} D_{\varphi ^{(i)}}^* \Big) D_V^* = \Big( \bigoplus _{i\in I} C_{\varphi ^{(i)}} \Big) C_V \notag
\end{flalign}
and 
\begin{flalign}
S_{v\varphi} = D_{v\varphi} C_{v\varphi} &= D_V \Big( \bigoplus _{i\in I} D_{\varphi ^{(i)}} \Big) \Big( \bigoplus _{i\in I} C_{\varphi ^{(i)}} \Big) C_V  \notag \\
&= D_V \Big( \bigoplus _{i\in I} D_{\varphi ^{(i)}} C_{\varphi ^{(i)}} \Big) C_V = D_V \Big( \bigoplus _{i\in I} S_{\varphi ^{(i)}} \Big) C_V .\notag  
\end{flalign}
\end{proof}

For fusion Riesz bases, Proposition \ref{great}, Theorem \ref{fusionframecar} and Proposition \ref{Op6} (e) imply the following:

\newtheorem{dualglobal}[lemmaisometry]{Proposition}

\begin{dualglobal}\label{dualglobal}
Let $(V_i , v_i , \varphi ^{(i)} )_{i \in I}$ be a fusion frame system and $v\varphi$ be its corresponding global frame. If in addition $(V_i , v_i ) _{i \in I}$ is a fusion Riesz basis, then 
\begin{equation}\label{Svphi-1}
S_{v\varphi}^{-1} = C_V^{-1} \big( \bigoplus _{i\in I} S^{-1}_{\varphi ^{(i)}} \big) D^{-1}_V .
\end{equation}
\end{dualglobal}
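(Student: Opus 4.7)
The plan is to start from identity (\ref{frame}) in Proposition \ref{great}, namely
\[
S_{v\varphi} = D_V \Bigl( \bigoplus_{i\in I} S_{\varphi^{(i)}} \Bigr) C_V,
\]
and invert both sides. For this to make sense, I need each of the three factors to be invertible with bounded inverse, so that $S_{v\varphi}$ becomes a product of three bounded invertibles and its inverse is the reversed product of the inverses.

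First, since $(V_i,v_i)_{i\in I}$ is a fusion Riesz basis, Theorem \ref{fusionframecar} gives that $D_V$ and $C_V$ are bounded bijections, hence boundedly invertible by the open mapping theorem; this produces the outer factors $C_V^{-1}$ and $D_V^{-1}$ in the claimed identity. Next, the middle factor: by the definition of a fusion frame system, the local frames $\varphi^{(i)}$ share common frame bounds $A,B>0$, so each $S_{\varphi^{(i)}}$ is positive and invertible with $\|S_{\varphi^{(i)}}^{-1}\|\le A^{-1}$. In particular, the family $(S_{\varphi^{(i)}}^{-1})_{i\in I}$ is completely bounded by $A^{-1}$. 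This is precisely the hypothesis of Proposition \ref{Op6}(e), applied to the operators $S_{\varphi^{(i)}}$ on $V_i$, which yields that $\bigoplus_{i\in I} S_{\varphi^{(i)}}$ is invertible on $\bigl(\sum_{i\in I}\oplus V_i\bigr)_{\ell^2}$ with
\[
\Bigl( \bigoplus_{i\in I} S_{\varphi^{(i)}} \Bigr)^{-1} = \bigoplus_{i\in I} S_{\varphi^{(i)}}^{-1}.
\]

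Combining these three invertibility statements, $S_{v\varphi}$ is a composition of three boundedly invertible operators, hence boundedly invertible, with
\[
S_{v\varphi}^{-1} = C_V^{-1} \Bigl( \bigoplus_{i\in I} S_{\varphi^{(i)}} \Bigr)^{-1} D_V^{-1} = C_V^{-1} \Bigl( \bigoplus_{i\in I} S_{\varphi^{(i)}}^{-1} \Bigr) D_V^{-1},
\]
which is exactly (\ref{Svphi-1}).

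There is essentially no hard step here: the result is an almost-immediate corollary of Proposition \ref{great} once Proposition \ref{Op6}(e) is invoked. The only point that requires brief verification is the complete boundedness of the family $(S_{\varphi^{(i)}}^{-1})_{i\in I}$, which is where the common lower bound $A$ of the local frames, built into the very definition of a fusion frame system, plays its decisive role; without this common bound, the block diagonal inverse on the middle factor could fail to exist as a bounded operator, and the identity would break down.
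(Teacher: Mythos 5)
Your proof is correct and follows exactly the route the paper intends: the paper states this proposition as an immediate consequence of Proposition \ref{great}, Theorem \ref{fusionframecar} and Proposition \ref{Op6}(e), which is precisely the combination you carry out, including the key observation that the common lower bound $A$ of the local frames makes $(S_{\varphi^{(i)}}^{-1})_{i\in I}$ completely bounded by $A^{-1}$.
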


%\begin{proof}
%$(S_{\varphi ^{(i)}})_{i\in I}$ is completely bounded by $B$ and $(S^{-1}_{\varphi ^{(i)}})_{i\in I}$ is completely bounded by $A^{-1}$. By Proposition \ref{Op6}, the operators $\bigoplus_{i\in I} S_{\varphi ^{(i)}}$ and $\bigoplus_{i\in I} S^{-1}_{\varphi ^{(i)}}$ are inverse to each other. Moreover, by Theorem \ref{fusionRieszbasischar}, $D_V$ and $C_V$ both are invertible. Therefore, we may invert \eqref{frame} and immediately obtain (\ref{Svphi-1}).
%\end{proof}

Considering the previous result leads to the question, whether we can replace $C_V^{-1}$ and $D_V^{-1}$ in (\ref{Svphi-1}) by $C_V^{\dagger}$ and $D_V^{\dagger}$ respectively, and obtain a generalization of Proposition \ref{dualglobal} to fusion frame systems, where the associated fusion frame is not necessarily a fusion Riesz basis. A positive answer to this question can be given by adding a technical assumption:

\newtheorem{dualglobal2}[lemmaisometry]{Proposition}

\begin{dualglobal2}\label{dualglobal2}
Let $(V_i , v_i , \varphi ^{(i)} )_{i \in I}$ be a fusion frame system and $v\varphi$ be the corresponding global frame. If 
\begin{equation}\label{ass}
\pi_{V_i} S_V^{-1} S_{v\varphi} = S_{\varphi^{(i)}} \pi_{V_i} \quad (\forall i\in I),
\end{equation}
then 
\begin{flalign}
S_{v\varphi}^{-1} = C_V^{\dagger} \big( \bigoplus _{i\in I} S^{-1}_{\varphi ^{(i)}} \big) D_V^{\dagger}.\notag
\end{flalign}
\end{dualglobal2}

\begin{proof}
For every $f\in \mathcal{H}$ we have
\begin{flalign}
C_V^{\dagger} \big( \bigoplus _{i\in I} S^{-1}_{\varphi ^{(i)}} \big) D_V^{\dagger} S_{v\varphi} f &= S_V^{-1} D_V \big( \bigoplus _{i\in I} S^{-1}_{\varphi ^{(i)}} \big) C_V S_V^{-1} S_{v\varphi} f \notag \\
&= S_V ^{-1} \sum_{i\in I} S^{-1}_{\varphi ^{(i)}} v_i ^2 \pi_{V_i} S_V^{-1} S_{v\varphi} f  \notag \\
&= S_V ^{-1} \sum_{i\in I} S^{-1}_{\varphi ^{(i)}} v_i ^2 S_{\varphi^{(i)}} \pi_{V_i} f &&\llap{(by (\ref{ass}))} \notag \\
&= S_V ^{-1} \sum_{i\in I} v_i ^2 \pi_{V_i} f \notag \\
&= S_V ^{-1} S_V f = f. \notag
\end{flalign}
This yields the claim.
\end{proof}

\newtheorem{Ex2}[lemmaisometry]{Example}

\begin{Ex2}
(a) We show that if the fusion frame associated to a fusion frame system is a fusion Riesz basis, then (\ref{ass}) is satisfied: Let $(V_i , v_i , \varphi ^{(i)} )_{i \in I}$ be a fusion frame system with corresponding global frame $v\varphi$. It can be shown \cite{Shaarbal} that the associated fusion frame $V$ is a fusion Riesz basis if and only if $v_i^2 \pi_{V_i} S_V^{-1} \pi_{V_j} = \delta_{ij} \pi_{V_j}$ for all $i,j \in I$. The latter implies (\ref{ass}), since by Proposition \ref{great}, for all $f\in \mathcal{H}$ we have
\begin{flalign}
\pi_{V_i} S_V^{-1} S_{v\varphi} f &= \pi_{V_i} S_V^{-1} D_V \big( \bigoplus _{i\in I} S_{\varphi ^{(i)}} \big) C_V f \notag \\
&= v_i ^2 \pi_{V_i} S_V^{-1} \Big( v_i^{-2} \sum_{j\in I} \pi_{V_j} v_j ^2 S_{\varphi ^{(j)}} \pi_{V_j} f \Big) \notag \\
&= \pi_{V_i} S_{\varphi ^{(i)}} \pi_{V_i} f = S_{\varphi ^{(i)}} \pi_{V_i} f.  \notag
\end{flalign}

\noindent (b) We give an example of a fusion frame system fulfilling (\ref{ass}), while its associated fusion frame is not a fusion Riesz basis: Let $(V_i , v_i , \varphi ^{(i)} )_{i \in I}$ be a fusion frame system with corresponding global frame $v\varphi$, and assume that all local frames $\varphi ^{(i)}$ are tight with respect to the same frame bound $A$. Then, by Corollary \ref{structures}, this implies $S_{v\varphi} = A\cdot S_V$. Therefore, for all $i\in I$ we have 
$$\pi_{V_i} S_V^{-1} S_{v\varphi} = \pi_{V_i} \cdot A = A \cdot \pi_{V_i} = S_{\varphi^{(i)}} \pi_{V_i} .$$
Note that the associated fusion frame $V$ does not need to be a fusion Riesz basis. 

\noindent 
\end{Ex2}

\newtheorem{pseudolemma}[lemmaisometry]{Lemma}

\begin{pseudolemma}\label{pseudolemma}
Let $(V_i , v_i , \varphi ^{(i)} )_{i \in I}$ be a fusion frame system. Then $\bigoplus_{i\in I} D_{\varphi^{(i)}}$ is surjective and
\begin{equation}\label{pseudoA}
    \Big( \bigoplus_{i\in I} D_{\varphi^{(i)}} \Big)^{\dagger} =  \bigoplus_{i\in I} D_{\varphi^{(i)}}^{\dagger} = \bigoplus_{i\in I} C_{\widetilde{\varphi^{(i)}}}.
\end{equation}
If all local frames $\varphi ^{(i)}$ are Riesz bases, then $\bigoplus_{i\in I} D_{\varphi^{(i)}}$ is invertible and it holds 
\begin{flalign}
    \Big( \bigoplus_{i\in I} D_{\varphi^{(i)}} \Big)^{-1} =  \bigoplus_{i\in I} D_{\varphi^{(i)}}^{-1} = \bigoplus_{i\in I} C_{\widetilde{\varphi^{(i)}}}.\notag
\end{flalign}
\end{pseudolemma}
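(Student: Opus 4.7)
The plan is to reduce both claims to the block-diagonal toolkit of Section \ref{sec3} applied to the family $(D_{\varphi^{(i)}})_{i\in I}$. The crucial quantitative input, which I would establish first, is complete boundedness of the family of pseudo-inverses $(D_{\varphi^{(i)}}^\dagger)_{i\in I}$: by definition of a fusion frame system each $\varphi^{(i)}$ is a frame for $V_i$ with common bounds $A \leq B$, so Theorem \ref{framechar} makes each $D_{\varphi^{(i)}}$ bounded and surjective, and the standard identity $D_{\varphi^{(i)}}^\dagger = C_{\varphi^{(i)}} S_{\varphi^{(i)}}^{-1} = C_{\widetilde{\varphi^{(i)}}}$, combined with the fact that $\widetilde{\varphi^{(i)}}$ is a frame with bounds in $[1/B,\,1/A]$, yields both $\Vert D_{\varphi^{(i)}}^\dagger\Vert \leq 1/\sqrt{A}$ and the lower bound $\Vert D_{\varphi^{(i)}}^\dagger c\Vert \geq (1/\sqrt{B})\Vert c\Vert$, uniformly in $i$.

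With these uniform bounds in hand, surjectivity of $\bigoplus_{i\in I} D_{\varphi^{(i)}}$ is a direct application of Proposition \ref{Op6}(c). For the pseudo-inverse identity I would invoke Proposition \ref{Op5}, which requires three ingredients: closed range of $\bigoplus_{i\in I} D_{\varphi^{(i)}}$ (immediate from surjectivity), complete boundedness of $(D_{\varphi^{(i)}}^\dagger)_{i\in I}$ (already shown), and closed range of $\bigoplus_{i\in I} D_{\varphi^{(i)}}^\dagger$. The last of these is the delicate point, since Example \ref{Ex} warns that pointwise closed-range properties need not carry over; however, the uniform lower bound $1/\sqrt{B}$ on the operators $D_{\varphi^{(i)}}^\dagger$ makes $\bigoplus_{i\in I} D_{\varphi^{(i)}}^\dagger$ bounded from below by $1/\sqrt{B}$ via Lemma \ref{Op2}(d), hence of closed range. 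The second equality $\bigoplus_{i\in I} D_{\varphi^{(i)}}^\dagger = \bigoplus_{i\in I} C_{\widetilde{\varphi^{(i)}}}$ is then purely componentwise.

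For the Riesz-basis case I would use Proposition \ref{Op6}(e): when each $\varphi^{(i)}$ is a Riesz basis, Theorem \ref{framechar} gives bijectivity of $D_{\varphi^{(i)}}$, so $D_{\varphi^{(i)}}^{-1} = D_{\varphi^{(i)}}^\dagger$, and complete boundedness of $(D_{\varphi^{(i)}}^{-1})_{i\in I}$ is inherited from the first half. Proposition \ref{Op6}(e) then yields invertibility of $\bigoplus_{i\in I} D_{\varphi^{(i)}}$ together with $\bigl(\bigoplus_{i\in I} D_{\varphi^{(i)}}\bigr)^{-1} = \bigoplus_{i\in I} D_{\varphi^{(i)}}^{-1}$, and the final equality is once more componentwise. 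The only genuine obstacle is the closed-range hypothesis for the block-diagonal pseudo-inverse in Proposition \ref{Op5}; once one notices that the common frame bounds of the fusion frame system deliver a uniform lower bound on the $C_{\widetilde{\varphi^{(i)}}}$ and hence allow Lemma \ref{Op2}(d) to upgrade pointwise bounded-from-below to block-diagonal bounded-from-below, the remainder is bookkeeping.
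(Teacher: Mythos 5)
Your proof is correct and follows essentially the same route as the paper: complete boundedness of $(D_{\varphi^{(i)}}^{\dagger})_{i\in I}$, Proposition~\ref{Op6}~(c) for surjectivity, Proposition~\ref{Op5} for the pseudo-inverse identity, and Proposition~\ref{Op6}~(e) for the Riesz-basis case. The only (harmless) divergence is in verifying that $\bigoplus_{i\in I} D_{\varphi^{(i)}}^{\dagger}$ has closed range: the paper factors it as $\big( \bigoplus_{i\in I} C_{\varphi ^{(i)}}\big) \big( \bigoplus_{i\in I} S_{\varphi ^{(i)}}^{-1}\big)$, a composition of bounded closed-range operators, whereas you obtain a uniform lower bound $1/\sqrt{B}$ from the canonical dual frames and invoke Lemma~\ref{Op2}~(d) --- both arguments are valid.
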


\begin{proof}
By Theorem \ref{framechar}, each $D_{\varphi^{(i)}}$ is surjective. Moreover, the family $(D_{\varphi^{(i)}}^{\dagger})_{i\in I} = (C_{\varphi ^{(i)}}S_{\varphi ^{(i)}}^{-1})_{i\in I}$ is completely bounded by $\sqrt{B}/A$. By Proposition \ref{Op6} (c), this implies that $\bigoplus_{i\in I} D_{\varphi^{(i)}}$ is surjective. Therefore, $\big( \bigoplus_{i\in I} D_{\varphi^{(i)}}\big)^* = \bigoplus_{i\in I} D_{\varphi^{(i)}}^* = \bigoplus_{i\in I} C_{\varphi^{(i)}}$ has closed range. In particular  
\begin{flalign}
\bigoplus_{i\in I} D_{\varphi^{(i)}}^{\dagger} &= \bigoplus_{i\in I} (C_{\varphi ^{(i)}}S_{\varphi ^{(i)}}^{-1}) \notag\\
&= \Big( \bigoplus_{i\in I} C_{\varphi ^{(i)}}\Big) \Big( \bigoplus_{i\in I} S_{\varphi ^{(i)}}^{-1}\Big) \notag
\end{flalign}
has closed range, since it is a composition of bounded closed range operators. Now, an application of Proposition \ref{Op5} yields \eqref{pseudoA}. The second statement can be shown similarly by applying Proposition \ref{Op6} (e).
\end{proof}

As a consequence of the previous lemma, we obtain operator identities for the fusion frame-related operators of a fusion frame system, similar to those of Proposition \ref{great}. By multiplying identity (\ref{synthesis}) from the right with $\big( \bigoplus_{i\in I} D_{\varphi^{(i)}} \big)^{\dagger} = \bigoplus_{i\in I} (C_{\varphi^{(i)}} S_{\varphi^{(i)}}^{-1})$, we obtain \eqref{fusionsynthesis} (see also \cite{heimo18,hemo14}). Proceeding analogously to the proof of Proposition \ref{great}, we immediately obtain (\ref{fusionanalysis}) and (\ref{fusionframe}): 

\newtheorem{fusionoperators}[lemmaisometry]{Proposition}

\begin{fusionoperators}\label{fusionoperators}
Let $(V_i , v_i , \varphi ^{(i)} )_{i \in I}$ be a fusion frame system and $v\varphi$ its corresponding global frame. Then 
\begin{flalign}
D_V &= D_{v\varphi} \bigoplus _{i\in I} (C_{\varphi ^{(i)} }S_{\varphi ^{(i)}}^{-1} ) \label{fusionsynthesis} \\
C_V &= \bigoplus _{i\in I} (S_{\varphi ^{(i)}}^{-1} D_{\varphi ^{(i)}} ) C_{v\varphi} \label{fusionanalysis} \\
S_V &= D_{v\varphi} \bigoplus _{i\in I} (C_{\varphi ^{(i)}} S_{\varphi ^{(i)}}^{-2} D_{\varphi ^{(i)}} ) C_{v\varphi} . \label{fusionframe} 
\end{flalign}
\end{fusionoperators}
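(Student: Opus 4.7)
The plan is to derive all three identities from Proposition \ref{great} by post-composing with the pseudo-inverse supplied by Lemma \ref{pseudolemma}. Concretely, I would start from (\ref{synthesis}), which reads $D_{v\varphi} = D_V \bigoplus_{i\in I} D_{\varphi^{(i)}}$, and right-multiply both sides by $\big(\bigoplus_{i\in I} D_{\varphi^{(i)}}\big)^{\dagger}$. By Lemma \ref{pseudolemma}, $\bigoplus_{i\in I} D_{\varphi^{(i)}}$ is surjective onto $\big(\sum_{i\in I}\oplus V_i\big)_{\ell^2}$, so the defining property (\ref{pseudoinverse2}) of the pseudo-inverse collapses the composition $\bigoplus_{i\in I} D_{\varphi^{(i)}} \cdot \big(\bigoplus_{i\in I} D_{\varphi^{(i)}}\big)^{\dagger}$ to the identity on that space. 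Substituting the explicit form $\big(\bigoplus_{i\in I} D_{\varphi^{(i)}}\big)^{\dagger} = \bigoplus_{i\in I}(C_{\varphi^{(i)}} S_{\varphi^{(i)}}^{-1})$ from the same lemma yields (\ref{fusionsynthesis}).

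For (\ref{fusionanalysis}), I would take the Hilbert space adjoint of (\ref{fusionsynthesis}). Using $C_V = D_V^*$ and $C_{v\varphi} = D_{v\varphi}^*$, together with Lemma \ref{Op2}(a) to pass the adjoint inside the direct sum, and invoking the self-adjointness of each $S_{\varphi^{(i)}}^{-1}$, the operator $\bigoplus_{i\in I}(C_{\varphi^{(i)}} S_{\varphi^{(i)}}^{-1})$ transforms into $\bigoplus_{i\in I}(S_{\varphi^{(i)}}^{-1} D_{\varphi^{(i)}})$, which is the desired form.

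For (\ref{fusionframe}), I would write $S_V = D_V C_V$ and substitute the two identities just established. This produces two adjacent block diagonal factors $\bigoplus_{i\in I}(C_{\varphi^{(i)}} S_{\varphi^{(i)}}^{-1})$ and $\bigoplus_{i\in I}(S_{\varphi^{(i)}}^{-1} D_{\varphi^{(i)}})$, which fuse under Lemma \ref{Op2}(c) into the single block diagonal operator $\bigoplus_{i\in I}(C_{\varphi^{(i)}} S_{\varphi^{(i)}}^{-2} D_{\varphi^{(i)}})$, giving (\ref{fusionframe}).

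There is no genuine obstacle here; once Proposition \ref{great} and Lemma \ref{pseudolemma} are in hand, the entire argument reduces to algebraic bookkeeping. The only step that requires an actual verification is the cancellation against the pseudo-inverse, where surjectivity of $\bigoplus_{i\in I} D_{\varphi^{(i)}}$ is essential; but that is exactly the content Lemma \ref{pseudolemma} was designed to provide.
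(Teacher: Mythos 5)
Your proposal is correct and follows essentially the same route as the paper: right-multiplying identity (\ref{synthesis}) by the pseudo-inverse $\big(\bigoplus_{i\in I} D_{\varphi^{(i)}}\big)^{\dagger} = \bigoplus_{i\in I}(C_{\varphi^{(i)}}S_{\varphi^{(i)}}^{-1})$ from Lemma \ref{pseudolemma} to get (\ref{fusionsynthesis}), then taking adjoints and composing $D_VC_V$ exactly as in the proof of Proposition \ref{great}. The cancellation via surjectivity and the fusing of adjacent block diagonal factors via Lemma \ref{Op2} are precisely the steps the paper relies on.
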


%\begin{proof}
%According to Lemma \ref{pseudolemma}, $\bigoplus_{i\in I} D_{\varphi^{(i)}}$ is surjective, which implies that $\big( \bigoplus_{i\in I} D_{\varphi^{(i)}} \big) \big( \bigoplus_{i\in I} D_{\varphi^{(i)}} \big)^{\dagger} = \mathcal{I}_{(\sum_{i\in I} \oplus V_i)_{\ell^2}}$. Therefore, by multiplying the identity \eqref{synthesis} from the right with $\big( \bigoplus_{i\in I} D_{\varphi^{(i)}} \big)^{\dagger} = \bigoplus_{i\in I} (C_{\varphi^{(i)}} S_{\varphi^{(i)}}^{-1})$ yields \eqref{fusionsynthesis}. Similarly to the proof of Theorem \ref{great}, we take adjoints in \eqref{fusionsynthesis} to obtain \eqref{fusionanalysis} and compose $D_V$ with $C_V$ to obtain \eqref{fusionframe}.
%\end{proof}

\subsection{Properties preserved in fusion frame systems}

Our results from Section \ref{subsec4-2} enable us to examine fusion frame systems in terms of their associated frame-related operators. This is particularly interesting, since the properties of these operators are directly linked to the properties of their respective frames. In this spirit, we prove the following characterization by using operator theoretic arguments. We remark that the implication (ii) $\Rightarrow$ (i) has been independently proved in \cite{Shaarbal} via another approach.    

\newtheorem{great2}[lemmaisometry]{Theorem}

\begin{great2}\label{great2}
Let $(V_i , v_i , \varphi ^{(i)} )_{i \in I}$ be a fusion frame system and $v\varphi$ be the corresponding global frame. Then the following are equivalent: 
\begin{itemize}
\item[(i)] $v \varphi$ is a Riesz basis. 
\item[(ii)] $(V_i , v_i )_{i \in I}$ is a fusion Riesz basis and $\varphi ^{(i)}$ is a Riesz basis for every $i\in I$.
\end{itemize}
In particular, if $V=(V_i , v_i )_{i \in I}$ is a fusion Riesz basis with fusion Riesz bounds $\alpha_V$ and $\beta_V$, then $\alpha_{v\varphi} = \alpha_V \cdot A$ and $\beta_{v\varphi} = \beta_V \cdot B$ are Riesz bounds for $v\varphi$. Conversely, if $v\varphi$ is a Riesz basis with Riesz bounds $\alpha_{v\varphi}$ and $\beta_{v\varphi}$, then $\alpha_V = A_{v\varphi}/B$ and $\beta_V = B_{v\varphi}/A$ are fusion Riesz bounds for $V$. 
\end{great2}

\begin{proof}
Hereinafter, we implicitly apply Theorems \ref{framechar} and \ref{fusionframecar} several times.

(i) $\Rightarrow$ (ii): If $v \varphi$ is a Riesz basis, then $D_{v\varphi} = D_V \bigoplus _{i\in I} D_{\varphi ^{(i)}}$ is bounded and bijective. In particular,  $\bigoplus _{i\in I} D_{\varphi ^{(i)}}$ is injective, which implies by Proposition \ref{Op6} that $D_{\varphi ^{(i)}}$ is bounded and bijective for every $i\in I$. This is equivalent to $\varphi ^{(i)}$ being a Riesz basis for every $i\in I$. Moreover, by Proposition \ref{fusionoperators}, we now have that $D_V = D_{v\varphi} \bigoplus _{i\in I} (C_{\varphi ^{(i)} }S_{\varphi ^{(i)}}^{-1} )$ is a composition of bounded and bijective operators and thus itself is  bounded and bijective. Therefore, $(V_i , v_i )_{i \in I}$ is a fusion Riesz basis.

(ii) $\Rightarrow$ (i): If all $\varphi ^{(i)}$ are Riesz bases and if $(V_i , v_i )$ is a fusion Riesz basis, then $D_V$ and $D_{\varphi ^{(i)}}$ ($i\in I$) are bounded and bijective. By Lemma \ref{pseudolemma}, $\bigoplus _{i\in I} D_{\varphi ^{(i)}}$ is bounded and bijective. Therefore the composition $D_{v\varphi} = D_V \bigoplus _{i\in I} D_{\varphi ^{(i)}}$ is bounded and bijective, i.e. $v\varphi$ is a Riesz basis.

The claim for the Riesz bounds follows from Theorem \ref{start}.
\end{proof}

Consider a fusion frame system $(V_i , v_i , \varphi ^{(i)} )_{i \in I}$ with corresponding global frame $v\varphi$. In \cite{cakuli08}, the authors posed the question, when centralized reconstruction equals distributed reconstruction, i.e. when the dual frame $(S_V ^{-1} S_{\varphi ^{(i)}} ^{-1} v_i \varphi_{ij} )_{i\in I, j\in J_i}$ of $v\varphi$ coincides with the canonical dual frame $(S_{v\varphi}^{-1} v_i \varphi _{ij})_{i\in I, j\in J_i}$. In \cite{cakuli08} the authors showed that this holds, if $(V_i)_{i\in I}$ is an orthonormal fusion basis. Observe that this also is true if we instead assume that $v\varphi$ is a Riesz basis, since, in this case, the canonical dual is the unique dual frame for $v\varphi$ \cite{ole1} and the question becomes trivial. In the next result, we only assume $(V_i , v_i )_{i \in I}$ to be a fusion Riesz basis, which is weaker than both of the previously mentioned assumptions (compare to Theorem \ref{great2}) and show that also in this case centralized reconstruction equals distributed reconstruction. 

\newtheorem{fusframesysthm}[lemmaisometry]{Theorem}

\begin{fusframesysthm}\label{fusframesysthm}
Let $(V_i , v_i , \varphi ^{(i)})_{i \in I}$ be a fusion frame system and $v\varphi$ be its corresponding global frame. If $V=(V_i , v_i )_{i\in I}$ is a fusion Riesz basis, then $(S_V ^{-1} S^{-1}_{\varphi ^{(i)}} v_i \varphi _{ij} )_{i\in I,j\in J_i}$ is the canonical dual frame of $v\varphi$, i.e.
$$(S_{v\varphi}^{-1} v_i \varphi _{ij} )_{i\in I, j\in J_i} 
= (S_V ^{-1} S^{-1}_{\varphi ^{(i)}} v_i \varphi _{ij} )_{i\in I, j\in J_i} .$$
\end{fusframesysthm}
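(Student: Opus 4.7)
My plan is to apply Proposition~\ref{dualglobal} to factor $S_{v\varphi}^{-1}$ and then simply follow the path of a single basis vector through each of the three factors. Since the hypothesis supplies exactly the condition needed in Proposition~\ref{dualglobal}, I can write
\begin{equation*}
S_{v\varphi}^{-1} \;=\; C_V^{-1}\,\bigl(\,\bigoplus_{i\in I} S_{\varphi^{(i)}}^{-1}\bigr)\,D_V^{-1},
\end{equation*}
where the block-diagonal operator is well-defined and bounded by Lemma~\ref{Op1} (its family is completely bounded by $A^{-1}$ since the local frames share the lower bound $A$). It suffices to verify the pointwise identity $S_{v\varphi}^{-1}(v_{i_0}\varphi_{i_0 j_0}) = S_V^{-1} S_{\varphi^{(i_0)}}^{-1}(v_{i_0}\varphi_{i_0 j_0})$ for an arbitrary fixed index pair $(i_0,j_0)$.

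First I would observe that $v_{i_0}\varphi_{i_0 j_0}\in V_{i_0}$, so Lemma~\ref{inverseoperators} applies: it yields that $D_V^{-1}(v_{i_0}\varphi_{i_0 j_0})$ is the element of $(\sum_{i\in I}\oplus V_i)_{\ell^2}$ whose $i_0$-th entry equals $v_{i_0}^{-1}(v_{i_0}\varphi_{i_0 j_0}) = \varphi_{i_0 j_0}$ and whose remaining entries vanish. Applying $\bigoplus_{i\in I} S_{\varphi^{(i)}}^{-1}$ componentwise then leaves a vector whose only non-zero entry is $S_{\varphi^{(i_0)}}^{-1}\varphi_{i_0 j_0}\in V_{i_0}$, sitting in the $i_0$-th slot. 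Finally, using $C_V^{-1} = S_V^{-1} D_V$ from (\ref{pseudoinversefusionframe2}) and the definition of $D_V$, this vector is sent to $S_V^{-1}(v_{i_0} S_{\varphi^{(i_0)}}^{-1}\varphi_{i_0 j_0}) = S_V^{-1} S_{\varphi^{(i_0)}}^{-1} v_{i_0}\varphi_{i_0 j_0}$, which is exactly the right-hand side of the asserted identity.

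Since $(i_0,j_0)$ was arbitrary, this establishes the claim. There is no genuine obstacle: the work is entirely bookkeeping, and the only non-trivial input is knowing that the fusion Riesz basis hypothesis implies condition (\ref{ass}) needed for Proposition~\ref{dualglobal} (or, here, directly that $D_V$ and $C_V$ are invertible). Example~\ref{Ex2}(a) already records this, and Lemma~\ref{inverseoperators} supplies the explicit description of $D_V^{-1}$ restricted to $V_{i_0}$ that makes the computation transparent.
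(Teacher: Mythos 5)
Your proof is correct and follows essentially the same route as the paper's: factor $S_{v\varphi}^{-1}$ via Proposition~\ref{dualglobal}, use Lemma~\ref{inverseoperators} to identify $D_V^{-1}$ on $V_{i_0}$, and push the single vector $v_{i_0}\varphi_{i_0 j_0}$ through the three factors, finishing with $C_V^{-1}=S_V^{-1}D_V$ from (\ref{pseudoinversefusionframe2}). One minor remark: Proposition~\ref{dualglobal} is stated directly under the fusion Riesz basis hypothesis, so there is no need to pass through condition (\ref{ass}) at all --- but you already noted the direct route via invertibility of $D_V$ and $C_V$ yourself.
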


\begin{proof}
By Lemma \ref{inverseoperators}, we have $D_V^{-1} v_i \varphi_{ij} = (..., 0, 0, \varphi_{ij}, 0, 0, ...)$ ($\varphi_{ij}$ in the $i$-th entry). By applying Proposition \ref{dualglobal}, this yields 
\begin{flalign}
S_{v\varphi}^{-1} v_i \varphi _{ij} &= C_V^{-1} \big( \bigoplus _{k\in I} S^{-1}_{\varphi ^{(k)}} \big) D^{-1}_V v_i \varphi _{ij} \notag \\
&= C_V^{-1} \big( \bigoplus _{k\in I} S^{-1}_{\varphi ^{(k)}} \big) (..., 0, 0, \varphi_{ij}, 0, 0, ...)  \notag \\ 
&= C_V^{-1} (..., 0, 0, S^{-1}_{\varphi ^{(i)}} \varphi_{ij}, 0, 0, ...)  \notag \\ 
&= C_V^{-1} D_V^{-1} D_V (..., 0, 0, S^{-1}_{\varphi ^{(i)}} \varphi_{ij}, 0, 0, ...)  \notag \\ 
&= S_V ^{-1} S^{-1}_{\varphi ^{(i)}} v_i \varphi _{ij} . \notag 
\end{flalign}
Since this is true for all $i\in I$ and $j\in J_i$, the proof is finished.
\end{proof}

%\
%
%For a better intuition, we illustrate identity (\ref{frame}) in the finite-dimensional case $\mathcal{H} = \mathbb{C}^n$. 
%
%\
%
%Matrix multiplication (block matrices)... $S_{v\varphi} = D_V \big( \bigoplus _{i\in I} S_{\varphi ^{(i)}} \big) C_V$
%
%\
%
%However, this matrix product makes also perfectly sense in the infinite dimensional setting, since also in this case, we can represent bounded operators between Hilbert direct sums by infinite matrices of operators, see {\lukas{[reference]}} for more details.
%
%\
%
We conclude with the following collecting results on inheritable structures in fusion frame systems. Recall that we denote the common frame bounds of the local frames in a fusion frame system by $A$ and $B$.

\newtheorem{structures}[lemmaisometry]{Corollary}

\begin{structures}\label{structures} %\emph{  $\varphi ^{(i)}$ ($i\in I$):} 
Let $(V_i , v_i , \varphi ^{(i)} )_{i \in I}$ be a fusion frame system for $\mathcal{H}$ and $v\varphi$ be the corresponding global frame.
   \begin{itemize}
       \item[(1)] \cite{caskut04} If $\varphi ^{(i)}$ is Parseval for every $i\in I$, then $S_{v\varphi} = S_V$.
       \item[(2)] If $\varphi ^{(i)}$ is $A$-tight for every $i\in I$, then $S_{v\varphi} = A\cdot S_V$.
       \item[(3)] If $\varphi ^{(i)}$ is $A_i$-tight for every $i\in I$, then $S_{v\varphi} = S_W$, where $W = (V_i, \sqrt{A_i}v_i)_{i\in I}$.
   \end{itemize}
\end{structures}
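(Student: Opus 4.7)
The plan is to deduce all three statements from the frame-operator identity
$$S_{v\varphi} = D_V \Bigl( \bigoplus_{i\in I} S_{\varphi^{(i)}} \Bigr) C_V$$
established in Proposition \ref{great}, combined with the well-known fact (recalled in the preliminaries) that a frame $\psi$ is $\alpha$-tight if and only if $S_\psi = \alpha \cdot \mathcal{I}$. Since (1) is the case $A_i = 1$ of (2), and (2) is the case $A_i \equiv A$ of (3), I would prove (3) first and then note that (1) and (2) drop out as special cases.

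To prove (3), I would assume $\varphi^{(i)}$ is $A_i$-tight for every $i\in I$, so that $S_{\varphi^{(i)}} = A_i \cdot \mathcal{I}_{V_i}$. For arbitrary $f\in\mathcal{H}$, I would compute the right-hand side of the identity step by step: starting from $C_V f = (v_i \pi_{V_i} f)_{i\in I}$, applying the block diagonal operator yields
$$\Bigl( \bigoplus_{i\in I} S_{\varphi^{(i)}} \Bigr) C_V f = (A_i v_i \pi_{V_i} f)_{i\in I},$$
and applying $D_V$ then gives
$$S_{v\varphi} f = \sum_{i\in I} v_i \cdot A_i v_i \pi_{V_i} f = \sum_{i\in I} (\sqrt{A_i} v_i)^2 \pi_{V_i} f = S_W f,$$
where $W = (V_i, \sqrt{A_i} v_i)_{i\in I}$. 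Here I would briefly remark that $W$ is a valid weighted family of subspaces (the weights $\sqrt{A_i} v_i$ are strictly positive), and that $S_W$ is a well-defined bounded operator on $\mathcal{H}$ because the common local frame bound hypothesis in the definition of a fusion frame system guarantees $A \le A_i \le B$, so that $W$ is at least Bessel.

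Statement (2) is then immediate by specialising to $A_i \equiv A$, which gives $S_W f = A \sum_{i\in I} v_i^2 \pi_{V_i} f = A\cdot S_V f$, and (1) is the further specialisation $A = 1$. There is essentially no obstacle in this proof; the only substantive point is making sure the series identities are valid termwise, which is underwritten by the completely bounded hypothesis built into the fusion frame system definition and already used in Proposition \ref{great}.
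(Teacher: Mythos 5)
Your proof is correct and takes essentially the same approach as the paper: both substitute $S_{\varphi^{(i)}} = A_i \cdot \mathcal{I}_{V_i}$ into the identity $S_{v\varphi} = D_V \bigl( \bigoplus_{i\in I} S_{\varphi^{(i)}} \bigr) C_V$ from Proposition \ref{great} and evaluate termwise. The only cosmetic difference is that you prove (3) first and obtain (1) and (2) as special cases, whereas the paper proves (2) directly and notes that (3) follows by the same computation.
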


\begin{proof}
(2) By (\ref{frame}), $S_{v\varphi} = D_V \big( \bigoplus_{i\in I} A \mathcal{I}_{V_i}\big) C_V = D_V A\mathcal{I}_{( \sum_{i\in I} \oplus V_i )_{\ell^2}} C_V = AD_V C_V = AS_V$. (3) follows similarly by observing that for any $f\in \mathcal{H}$ we have $S_{v\varphi} f = D_V \big( \bigoplus_{i\in I} A_i \mathcal{I}_{V_i}\big) C_V f = \sum_{i\in I} A_i v_i^2 \pi_{V_i} f = S_W f$.
\end{proof}

\newtheorem{structures2}[lemmaisometry]{Corollary}

As an immediate consequence we obtain the following:

\begin{structures2}\label{structures2} Let $(V_i , v_i , \varphi ^{(i)} )_{i \in I}$ be a fusion frame system for $\mathcal{H}$ and $v\varphi$ be the corresponding global frame.
   \begin{itemize}
   \item[(1)] \cite{cakuli08} If $\varphi ^{(i)}$ is Parseval for every $i\in I$, then $V$ is Parseval if and only if $v\varphi$ is Parseval.
   \item[(2)] \cite{cakuli08} If $\varphi ^{(i)}$ is Parseval for every $i\in I$, then $V$ is $A$-tight if and only if $v\varphi$ is $A$-tight.
   \item[(3)] If $\varphi ^{(i)}$ is $A$-tight for every $i\in I$, then $V$ is tight if and only if $v\varphi$ is tight.
   \item[(4)] In case $\mathcal{H}$ is a RKHS: If $\varphi ^{(i)}$ is Parseval for every $i\in I$, then $V$ is painless if and only if $v\varphi$ is painless.
   \item[(5)] In case $\mathcal{H}$ is a RKHS: If $\varphi ^{(i)}$ is $A$-tight for every $i\in I$, then $V$ is painless if and only if $v\varphi$ is painless.
   \end{itemize}
\end{structures2}

\newtheorem{structuresB}[lemmaisometry]{Corollary}

%So, if we restrict the local frames of a fusion frame system (in the above sense), we can deduce relations between $V$ and $v\varphi$, but we generally cannot conclude properties of $V$ or $v\varphi$ without further assumptions. This is demonstrated by an extreme-case scenario in the Appendix, where we give an example of a (non-trivial) fusion frame system for $\mathbb{R}^3$, where all local frames are orthonormal bases, while $V$ and $v\varphi$ are not even painless. 
%
We can also show a converse of Corollary \ref{structures} with restrictions to the fusion frame $V$.

\begin{structuresB}\label{structuresB}
Let $(V_i , v_i , \varphi ^{(i)} )_{i \in I}$ be a fusion frame system and $v\varphi$ be the corresponding global frame.
\begin{itemize}
\item[(1)] If $S_{v\varphi} = S_V$, then $\bigoplus_{i\in I} (\mathcal{I}_{V_i} - S_{\varphi^{(i)}}) = 0$ on $\mathcal{R}(C_V)$. In particular, if $V$ is a fusion Riesz basis and $S_{v\varphi} = S_V$, then $\varphi ^{(i)}$ is Parseval for every $i\in I$.
\item[(2)] If $S_{v\varphi} = A\cdot S_V$, then $\bigoplus_{i\in I} (A\cdot \mathcal{I}_{V_i} - S_{\varphi^{(i)}}) = 0$ on $\mathcal{R}(C_V)$. In particular, if $V$ is a fusion Riesz basis $S_{v\varphi} = A\cdot S_V$, then $\varphi ^{(i)}$ is $A$-tight for every $i\in I$.
\item[(3)] If $S_{v\varphi} = S_W$, where $W = (V_i, \sqrt{A_i}v_i)_{i\in I}$, then $\bigoplus_{i\in I} (A_i \cdot \mathcal{I}_{V_i} - S_{\varphi^{(i)}}) = 0$ on $\mathcal{R}(C_V)$. In particular, if $V$ is a fusion Riesz basis and $S_{v\varphi} = S_W$, then $\varphi ^{(i)}$ is $A_i$-tight for every $i\in I$.
\end{itemize}
\end{structuresB}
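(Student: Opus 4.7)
The plan is to deduce all three parts uniformly from the frame operator identity in Proposition~\ref{great}, namely $S_{v\varphi} = D_V \bigl( \bigoplus_{i\in I} S_{\varphi^{(i)}} \bigr) C_V$. First I would rewrite each right-hand side of the three hypotheses as a block-diagonal sandwich of the same form: trivially $S_V = D_V \bigl( \bigoplus_{i\in I} \mathcal{I}_{V_i} \bigr) C_V$, then $A\cdot S_V = D_V \bigl( \bigoplus_{i\in I} A\,\mathcal{I}_{V_i} \bigr) C_V$, and finally a one-line computation yields
\[
S_W f \;=\; \sum_{i\in I} A_i v_i^2 \pi_{V_i} f \;=\; D_V \Bigl( \bigoplus_{i\in I} A_i\,\mathcal{I}_{V_i} \Bigr) C_V f.
\]

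Setting $\alpha_i$ equal to $1$, $A$, or $A_i$ in the respective parts, and letting $T_\alpha := \bigoplus_{i\in I}(\alpha_i \mathcal{I}_{V_i} - S_{\varphi^{(i)}})$ (which is a bounded block-diagonal operator by Lemma~\ref{Op1}, since $(S_{\varphi^{(i)}})_{i\in I}$ is completely bounded by $B$), subtracting the two representations of the same operator yields the single sandwich identity
\[
D_V\, T_\alpha\, C_V \;=\; 0,
\]
which is the content of the assertion that $T_\alpha$ vanishes on $\mathcal{R}(C_V)$ in the sense that $T_\alpha$ sends $\mathcal{R}(C_V)$ into $\mathcal{N}(D_V) = \mathcal{R}(C_V)^\perp$ (using that $\mathcal{N}(D_V) = \mathcal{R}(D_V^*)^\perp = \mathcal{R}(C_V)^\perp$).

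For the \emph{in particular} claims I would add the hypothesis that $V$ is a fusion Riesz basis and invoke Theorem~\ref{fusionframecar}: then both $D_V$ and $C_V$ are bijective, so the sandwich identity $D_V T_\alpha C_V = 0$ can be cancelled from the left and from the right to obtain $T_\alpha = 0$ on all of $\bigl(\sum_{i\in I}\oplus V_i\bigr)_{\ell^2}$. Applying the component-wise definition of a block diagonal operator (Lemma~\ref{Op2}) then forces $S_{\varphi^{(i)}} = \alpha_i\, \mathcal{I}_{V_i}$ for every $i\in I$, i.e., each $\varphi^{(i)}$ is Parseval, $A$-tight, or $A_i$-tight respectively, as required.

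The delicate point to keep in mind is the interpretation of "$T_\alpha = 0$ on $\mathcal{R}(C_V)$": the operator need \emph{not} literally annihilate $\mathcal{R}(C_V)$ pointwise — the sandwich identity only guarantees that $T_\alpha C_V f$ lies in $\mathcal{N}(D_V)$, which is nontrivial in general (as simple low-dimensional examples with overlapping $V_i$'s show). This ambiguity disappears precisely in the fusion Riesz basis case, where $\mathcal{N}(D_V) = \{0\}$, and this collapse to pointwise vanishing is exactly what the local tightness conclusions require.
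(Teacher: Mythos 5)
Your proof is correct and follows essentially the same route as the paper: both start from identity \eqref{frame} of Proposition~\ref{great}, rewrite each hypothesis as the sandwich identity $D_V\,T_\alpha\,C_V=0$ with $T_\alpha=\bigoplus_{i\in I}(\alpha_i\mathcal{I}_{V_i}-S_{\varphi^{(i)}})$, and in the fusion Riesz basis case use Theorem~\ref{fusionframecar} to upgrade this to $T_\alpha=0$ and hence $S_{\varphi^{(i)}}=\alpha_i\mathcal{I}_{V_i}$ componentwise. The only divergence is in how the weaker, first conclusion is extracted: the paper passes to the quadratic form $\langle T_\alpha C_V f, C_V f\rangle=0$ and invokes self-adjointness of $T_\alpha$ (Lemma~\ref{Op2}(b)), whereas you read off directly that $T_\alpha$ maps $\mathcal{R}(C_V)$ into $\mathcal{N}(D_V)=\mathcal{R}(C_V)^{\perp}$; by polarization these deliver exactly the same statement, namely that the compression of $T_\alpha$ to $\mathcal{R}(C_V)$ vanishes. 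Your closing caveat --- that this need not mean pointwise annihilation of $\mathcal{R}(C_V)$ --- is a fair reading that applies equally to the paper's own phrasing, and, as you note, the ambiguity is immaterial for the ``in particular'' claims, where $\mathcal{R}(C_V)$ is the whole space (paper) or $D_V$ and $C_V$ can be cancelled outright (your version).
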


\begin{proof}
(1) By (\ref{frame}), we have $S_V - S_{v\varphi} = D_V\bigoplus_{i\in I} (\mathcal{I}_{V_i} - S_{\varphi^{(i)}}) C_V = 0$. This implies that
\begin{flalign}
0 &= \Big\langle D_V \bigoplus_{i\in I} (\mathcal{I}_{V_i} - S_{\varphi^{(i)}}) C_V f, f \Big\rangle_{\mathcal{H}} \notag \\
&= \Big\langle \bigoplus_{i\in I} (\mathcal{I}_{V_i} - S_{\varphi^{(i)}}) C_V f, C_V f \Big\rangle_{(\sum_{i\in I}\oplus V_i)_{\ell^2}} \quad (\forall f\in \mathcal{H}). \notag
\end{flalign}
Since $\bigoplus_{i\in I} (\mathcal{I}_{V_i} - S_{\varphi^{(i)}})$ is self-adjoint by Lemma \ref{Op2} (b), this implies $\bigoplus_{i\in I} (\mathcal{I}_{V_i} - S_{\varphi^{(i)}}) = 0$ on $\mathcal{R}(C_V)$. If $V$ additionally is a fusion Riesz basis, then $\mathcal{R}(C_V) = (\sum_{i\in I}\oplus V_i)_{\ell^2}$, which implies $S_{\varphi^{(i)}} = \mathcal{I}_{V_i}$ for all $i\in I$. (2) and (3) can be shown analogously.
\end{proof}
		
\newtheorem{structures3}[lemmaisometry]{Corollary}

\section*{Acknowledgments}

The authors thank Mitra Shamsabadi, Karin Schnass and Hans Feichtinger for related discussions. The authors also thank the anonymous reviewers for their helpful comments and suggestions. 

This work is supported by the project P 34624 \emph{"Localized, Fusion and Tensors of Frames"} (LoFT) of the Austrian Science Fund (FWF).

\section*{Conflict of interest}

On behalf of all authors, the corresponding author states that there is no conflict of interest.

\bibstyle{abbrv}
\bibliography{biblioall}
\end{document}